\newtheorem{theorem}{Theorem}[section]
\newtheorem*{theorem*}{Theorem}
\newtheorem{proposition}[theorem]{Proposition}
\newtheorem{remark}[theorem]{Remark}
\begin{document}

\title[Alexandrov--Fenchel inequalities in hyperbolic space]{Weighted Alexandrov-Fenchel inequalities in hyperbolic space and a conjecture of Ge, Wang and Wu}

\author[F. Gir\~ao]{Frederico Gir\~ao}


\author[D. Pinheiro]{Diego Pinheiro}

\author[N. M. Pinheiro]{Neilha M. Pinheiro}


\author[D. Rodrigues]{Diego Rodrigues}

\email{fred@mat.ufc.br}
\email{diegodsp01@gmail.com}
\email{neilhamat@gmail.com}
\email{diego.sousa.ismart@gmail.com}

\thanks{Frederico Gir\~ao was partially supported by CNPq, grant number 306196/2016-6 and by FUNCAP/CNPq/PRONEX, grant number 00068.01.00/15. This study was financed in part by the Coordena\c c\~ao de Aperfei\c coamento de Pessoal de N\'{\i}vel Superior - Brasil (CAPES) - Finance Code 001.}


 \subjclass[2010]{Primary: 51M16. Secondary: 53C44, 53A35}

 \keywords{Alexandrov--Fenchel inequality, horospherical convexity}

\begin{abstract} 
We consider a conjecture made by Ge, Wang and Wu regarding weighted Alexandrov--Fenchel inequalities for horospherically convex hypersurfaces in hyperbolic space
(a bound, for some physically motivated weight function, of the weighted integral of the $k^{\mathrm{th}}$ mean curvature in terms of the area of the hypersurface). We prove an inequality very similar to the conjectured one. Moreover, when $k$ is zero and the ambient space has dimension three, we give a counterexample to the conjectured inequality.
\end{abstract}

\maketitle

\section{Introduction}
Let $\Sigma$ be a convex hypersurface in $\mathbb{R}^n$, $n \geq 3$. The Alexandrov--Fenchel inequalities \cite{Ale37, Ale38} state that
\begin{equation} \label{A-F inequalities}
\left( \frac{1}{\omega_{n-1}} \int_\Sigma H_k \, d\Sigma \right)^{n-k} \geq   \left( \frac{1}{\omega_{n-1}}\int_\Sigma H_{k-1} \, d\Sigma \right)^{n-k-1},
\end{equation}
for $k = 1, \ldots, n-1$, where $\omega_{n-1}$ is the area of the unit sphere $\mathbb{S}^{n-1} \subset \mathbb{R}^{n}$ and $H_k$ is the normalized $k^{\mathrm{th}}$ mean curvature of $\Sigma$, that is,
$$
H_k = \frac{1}{C_{n-1}^k} \sigma_k,
$$
$k = 0, 1, \ldots, n-1$, with $\sigma_k$ being the $k^{\mathrm{th}}$ elementary symmetric function of the principal curvature vector $\lambda = (\lambda_1, \ldots, \lambda_{n-1})$. Moreover, the equality holds if and only if $\Sigma$ is a round sphere. In \cite{Guan-Li}, using a certain inverse curvature flow, Guan and Li showed that (\ref{A-F inequalities}) still hods for any $\Sigma$ which is star-shaped and $k$-convex (which means that $\sigma_i(\lambda) \geq 0$ for $i = 0, 1, \ldots, k$).

The $k=1$ case of (\ref{A-F inequalities}), namely,
$$
\int_\Sigma H_1 \, d\Sigma \geq  \omega_{n-1} \left( \frac{|\Sigma|}{\omega_{n-1}} \right)^{\frac{n-2}{n-1}},
$$
where $|\Sigma|$ is the area of $\Sigma$, is a key step in the proof of the Penrose inequality for graphs, given by Lam in \cite{Lam} (see also \cite{dLG1} and \cite{MV}). More generally, the cases of (\ref{A-F inequalities}) for which $k$ is odd were used in a crucial way to establish, for graphs, versions of the Penrose inequality in the context of the so called Gauss--Bonnet--Chern mass \cite{GWW1} (see also \cite{LWX} and \cite{dSG}).

Let us now consider the hyperbolic $n$-space $\mathbb{H}^n$ to be the ambient space. We will work with two models of $\mathbb{H}^n$: the warped product model and the Poincar\'e ball model. The former consists of $\mathbb{R}_+ \times \mathbb{S}^{n-1}$ endowed with the metric
$$
dr^2 + (\sinh^2 r)h,
$$
where $h$ is the round metric on the unit sphere $\mathbb{S}^{n-1} \subset \mathbb{R}^n$. The later consists of the unit ball 
$$\mathbb{B}^n = \lbrace x \in \mathbb{R}^n ; \ |x| \leq 1 \rbrace$$
endowed with the metric
$$
\left( \frac{2}{1 - |x|^2} \right)^2 \delta,
$$
where $| \ |$ denotes the Euclidean norm and $\delta$ denotes the Euclidean metric.

A hypersurface $\Sigma$ in $\mathbb{H}^n$ is said to be \emph{star-shaped} if it can be written as a graph over a geodesic sphere centered at the origin. We say that $\Sigma$ is \emph{strictly mean-convex} if its mean curvature $H_1$ is positive everywhere. Also, $\Sigma$ is said to be \emph{horospherically convex} if all of its principal curvatures are greater than or equal to $1$.

We consider the function $\rho: \mathbb{H}^n \to \mathbb{R}$ which in the warped product model is given by
$$
\rho = \cosh{r}.
$$
When working with the Poincar\'e model, the function $\rho$ has the expression
$$
\rho = \frac{1 + |x|^2}{1 - |x|^2}.
$$

 We consider also the \emph{support function} $p: \Sigma \to \mathbb{R}$, which is defined by
$$
    p = \langle D \rho, \xi \rangle,
$$
where $\xi$ is the outward unit normal vector to $\Sigma$ and where $\langle \ , \ \rangle$ denotes the hyperbolic metric and $D$ denotes its Levi-Civita connection.

In \cite{dLG2}, de Lima together with the first named author showed the following Alexandrov--Fenchel-type inequality: if $\Sigma$ is a star-shaped and strictly mean-convex hy\-per\-sur\-face in $\mathbb{H}^n$, $n \geq 3$, then
\begin{equation} \label{dLG inequality}
\int_\Sigma \rho H_1 \, d\Sigma \geq  \omega_{n-1} \left[ \left( \frac{|\Sigma|}{\omega_{n-1}} \right)^{\frac{n-2}{n-1}} + \left( \frac{|\Sigma|}{\omega_{n-1}} \right)^{\frac{n}{n-1}} \right],
\end{equation}
with the equality occurring if and only if $\Sigma$ is a geodesic sphere centered at the origin. The proof uses, among other ingredients, two monotone quantities along the inverse mean curvature flow (IMCF) and an inequality due to Brendle, Hung and Wang \cite{BHW}. Inequality (\ref{dLG inequality}) was conjectured by Dahl, Gicquaud and Sakovich in \cite{DGS}, where they found an explicit formula for the mass of an asymptotically hyperbolic graph; (\ref{dLG inequality}) was then the only thing left to show in order to proved the Penrose inequality in this context.

In \cite{GWW2}, Ge, Wang and Wu defined the Gauss--Bonnet--Chern mass for asymptotically hyperbolic manifolds.
In order to establish, in this context, the Penrose inequality for graphs, they showed, for odd $k$, the following weighted Alexandrov--Fenchel-type inequality: if $\Sigma$ is a horospherically convex hypersurface in $\mathbb{H}^n$, then it holds
\begin{equation} \label{GWW inequality}
\int_\Sigma \rho H_k \, d\Sigma \geq
\omega_{n-1} \left[ \left( \frac{|\Sigma|}{\omega_{n-1}} \right)^{\frac{2n}{(k+1)(n-1)}} + \left( \frac{|\Sigma|}{\omega_{n-1}} \right)^{\frac{2(n-k-1)}{(k+1)(n-1)}} \right]^{\frac{k+1}{2}},
\end{equation}
with the equality occurring if and only if $\Sigma$ is a geodesic sphere centered at the origin. They accomplished this by an induction argument (from $j$ to $j+2$), with the base case being inequality (\ref{dLG inequality}). 

Also in \cite{GWW2} it was conjectured that (\ref{GWW inequality}) holds for even values of $k$ as well. They remarked that the induction argument (from $j$ to $j+2$) still works in this case. Thus, it would be enough to show the validity of (\ref{GWW inequality}) for $k=0$, that is,
\begin{equation} \label{conjecture}
    \int_\Sigma \rho \, d\Sigma \geq \omega_{n-1} \left[ \left( \frac{|\Sigma|}{\omega_{n-1}} \right)^{\frac{2n}{n-1}} + \left( \frac{|\Sigma|}{\omega_{n-1}} \right)^{2} \right]^{\frac{1}{2}}.
\end{equation}

Now let's state the main results of this paper. Our first main result shows the existence of a counterexample to (\ref{conjecture}) when $n=3$. 

\begin{theorem} \label{first theorem}
There exists a horospherically convex hypersurface $\Gamma$ in $\mathbb{H}^3$ such that
$$
\int_\Gamma \rho \, d\Gamma < \omega_{n-1} \left[ \left( \frac{|\Gamma|}{\omega_{n-1}} \right)^{\frac{2n}{n-1}} + \left( \frac{|\Gamma|}{\omega_{n-1}} \right)^{2} \right]^{\frac{1}{2}}.
$$
\end{theorem}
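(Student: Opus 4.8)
The plan is to test~(\ref{conjecture}) against horospherically convex surfaces of very small size. In that regime horospherical convexity is essentially free, and the two sides of the inequality agree to leading order, so everything reduces to comparing their next-order terms. Concretely, fix the point $O\in\mathbb{H}^3$ which is the center of $\rho$, identify $T_O\mathbb{H}^3\cong\mathbb{R}^3$ via geodesic normal coordinates, let $\hat\Sigma\subset\mathbb{R}^3$ be any closed strictly convex surface enclosing the origin (so that it is a radial graph about $O$), and set $\Sigma_t:=\exp_O(t\,\hat\Sigma)$ for $t>0$. Rescaling $\hat\Sigma$ by $t$ and applying $\exp_O$ multiplies its second fundamental form by $t^{-1}$ up to a correction that stays bounded as $t\to0$, so the principal curvatures of $\Sigma_t$ are at least $t^{-1}\min_{\hat\Sigma}\hat\kappa_i-Ct$, which exceeds $1$ once $t$ is small; hence $\Sigma_t$ is horospherically convex for small $t$. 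It therefore suffices to produce one convex $\hat\Sigma$ for which $\Sigma_t$ violates~(\ref{conjecture}) for $t$ small.

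The next step is a second-order expansion in $t$. Since $\exp_O(t\hat x)$ lies at distance $t|\hat x|$ from $O$, the function $\rho$ equals $\cosh(t|\hat x|)=1+\tfrac{t^2}{2}|\hat x|^2+O(t^4)$ on $\Sigma_t$, while $dA_{\Sigma_t}=t^2(1+O(t^2))\,dA_{\hat\Sigma}$. Hence
$$
\int_{\Sigma_t}\rho\,d\Sigma_t-|\Sigma_t|=\int_{\Sigma_t}(\rho-1)\,d\Sigma_t=\frac{t^4}{2}\int_{\hat\Sigma}|\hat x|^2\,dA_{\hat\Sigma}+O(t^6),
$$
whereas $|\Sigma_t|=t^2|\hat\Sigma|+O(t^4)$ together with $\omega_2=4\pi$ and $4\pi\sqrt{A^3+A^2}=4\pi A+2\pi A^2+O(A^3)$ (for $A=|\Sigma_t|/(4\pi)$) gives $4\pi\sqrt{A^3+A^2}-|\Sigma_t|=\tfrac{|\hat\Sigma|^2}{8\pi}\,t^4+O(t^6)$. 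Subtracting the two,
$$
\int_{\Sigma_t}\rho\,d\Sigma_t-4\pi\sqrt{\Bigl(\tfrac{|\Sigma_t|}{4\pi}\Bigr)^{3}+\Bigl(\tfrac{|\Sigma_t|}{4\pi}\Bigr)^{2}}=\frac{t^4}{2}\left(\int_{\hat\Sigma}|\hat x|^2\,dA_{\hat\Sigma}-\frac{|\hat\Sigma|^2}{4\pi}\right)+O(t^6).
$$
Thus, for small $t$, (\ref{conjecture}) for $\Sigma_t$ holds if and only if the Euclidean quantity $\int_{\hat\Sigma}|\hat x|^2\,dA_{\hat\Sigma}-|\hat\Sigma|^2/(4\pi)$ is nonnegative. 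Round spheres make it vanish (in accordance with the conjectured equality case), so the issue is whether it is nonnegative for every closed convex $\hat\Sigma$, positioned about $O$ so as to make the integral smallest.

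It is not, and a small spherical-harmonic perturbation of the sphere shows this. Let $\hat\Sigma$ be the graph $r=1+sY$ over the unit sphere with $Y$ a spherical harmonic of degree $\ell\ge2$, so $\int_{S^2}Y\,d\omega=0$ and the area centroid stays at $O$ to the relevant order; using $dA_{\hat\Sigma}=r\sqrt{r^2+s^2|\nabla Y|^2}\,d\omega$ one computes
$$
4\pi\int_{\hat\Sigma}|\hat x|^2\,dA_{\hat\Sigma}-|\hat\Sigma|^2=4\pi s^2\int_{S^2}\Bigl(4Y^2-\tfrac12|\nabla Y|^2\Bigr)\,d\omega+O(s^3)=4\pi s^2\Bigl(4-\tfrac{\ell(\ell+1)}{2}\Bigr)\int_{S^2}Y^2\,d\omega+O(s^3),
$$
which is strictly negative as soon as $\ell\ge3$ (for $\ell=2$ it is positive). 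One may take, for instance, the degree-three zonal harmonic $Y(\omega)=\tfrac12(5\omega_3^{3}-3\omega_3)$, which makes $\hat\Sigma$, and hence $\Gamma$, a surface of revolution. Fixing $s\neq0$ small enough that $\hat\Sigma$ is convex and the last display is negative, and then $t>0$ small enough that $\Sigma_t$ is horospherically convex and the $O(t^6)$ remainder does not swallow the main term, the surface $\Gamma:=\Sigma_t$ satisfies the strict inequality of Theorem~\ref{first theorem}.

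The crux is conceptual rather than computational: one must realize that the small-scale regime turns~(\ref{conjecture}) into a purely Euclidean inequality, and that — despite the extremality of round spheres — this Euclidean inequality fails, its deficit being governed by the Laplace eigenvalues $\ell(\ell+1)$ on $S^2$, with the modes $\ell\ge3$ pushing it negative. The only estimate that needs real care is the uniform control of the principal curvatures of $\exp_O(t\hat\Sigma)$ used to guarantee horospherical convexity; this is standard, but if one prefers to avoid the small-scale limit altogether, the same conclusion should follow from a direct second-variation analysis of both sides of~(\ref{conjecture}) at a geodesic sphere of arbitrary fixed radius, where the $\ell\le2$ versus $\ell\ge3$ dichotomy reappears.
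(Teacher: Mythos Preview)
Your argument is correct and follows the same overall strategy as the paper's: shrink a strictly convex Euclidean surface so that, viewed in $\mathbb{H}^3$, it becomes horospherically convex, and observe that in this small-scale limit the conjectured inequality~(\ref{conjecture}) reduces to the Euclidean statement $\int_{\hat\Sigma}|\hat x|^2\,dA_{\hat\Sigma}\ge |\hat\Sigma|^2/(4\pi)$ --- equivalently $\mathcal{Q}(\hat\Sigma)\ge 1$ in the paper's notation --- which fails for some convex $\hat\Sigma$. The implementation, however, is different. The paper works in the Poincar\'e ball model and sets up the support function flow~(\ref{sff}), proving that the quantity $\mathcal{P}$ of~(\ref{P}) is monotone along it (Proposition~\ref{prop P is nonincreasing}) and converges to $\mathcal{Q}$ (Proposition~\ref{lim P to Q}); this flow apparatus is not built just for the counterexample but is also the backbone of the proof of Theorem~\ref{second theorem}. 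You instead use geodesic normal coordinates and a direct second-order Taylor expansion in the scale parameter $t$, which is more elementary and bypasses the flow entirely. Moreover, where the paper imports the existence of a convex $\hat\Sigma$ with $\mathcal{Q}(\hat\Sigma)<1$ from~\cite{GR}, you construct it explicitly as a degree-$\ell\ge 3$ spherical-harmonic perturbation of the round sphere, making the argument self-contained and producing a concrete surface of revolution as $\Gamma$. The trade-off: the paper's monotone-quantity framework is reusable for the positive results, while your approach is lighter but tailored to the counterexample alone.
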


Our second main result is an inequality very similar to (\ref{conjecture}). The precise statement is the following:

\begin{theorem} \label{second theorem}
Let $\Sigma$ be a star-shaped hypersurface in $\mathbb{H}^n$ satisfying $$H_1 \geq 1.$$ It holds that
\begin{equation} \label{weak inequality}
    \int_\Sigma \rho \, d\Sigma > \omega_{n-1} \left[ \left( \frac{n-1}{n} \right)^2 \left( \frac{|\Sigma|}{\omega_{n-1}} \right)^{\frac{2n}{n-1}} + \left( \frac{|\Sigma|}{\omega_{n-1}} \right)^{2} \right]^{\frac{1}{2}}.
\end{equation} 
\end{theorem}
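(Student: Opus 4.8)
The plan is to exploit the inverse mean curvature flow (IMCF) together with a suitable monotone quantity, mimicking the strategy used to prove inequality (\ref{dLG inequality}) in \cite{dLG2}, but keeping track carefully of the weaker hypothesis $H_1 \geq 1$ instead of strict mean-convexity alone, and of which monotonicity arguments survive. First I would run the IMCF $\partial_t X = H_1^{-1}\xi$ starting from $\Sigma$; under the star-shapedness and $H_1 \geq 1$ assumptions this flow exists for all time and the flow hypersurfaces $\Sigma_t$ become, asymptotically, large geodesic spheres centered at the origin, with $H_1 \to 1$. The area evolves by $|\Sigma_t| = |\Sigma| e^t$, so proving the inequality for $\Sigma$ is equivalent to showing that a certain function of $t$, built from $\int_{\Sigma_t} \rho\, d\Sigma_t$ and $|\Sigma_t|$, has the right sign at $t = 0$ given its limiting behavior as $t \to \infty$.

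The core step is to identify the monotone quantity. Using the first variation of $\int_{\Sigma_t}\rho\, d\Sigma_t$ under IMCF — here one uses $\Delta \rho = n\rho$ on $\mathbb{H}^n$ and the definition $p = \langle D\rho, \xi\rangle$, together with the divergence theorem and the Gauss equation — one obtains an evolution equation of the form
$$
\frac{d}{dt} \int_{\Sigma_t} \rho\, d\Sigma_t = \int_{\Sigma_t} \rho\, d\Sigma_t + \int_{\Sigma_t} \frac{p}{H_1}\left( \text{something nonnegative} \right) d\Sigma_t + (\text{lower order}),
$$
and the key algebraic input is a pointwise Newton–MacLaurin-type inequality bounding $H_1$ in terms of the full second fundamental form. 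The crucial simplification afforded by the hypothesis $H_1 \geq 1$ (rather than the full horospherical convexity $\lambda_i \geq 1$) is that one can still control the relevant curvature integrals from below using only $H_1$, at the cost of the factor $\left(\frac{n-1}{n}\right)^2$ appearing in (\ref{weak inequality}); I would isolate exactly where this constant enters, namely in estimating $\int_{\Sigma_t} H_1\, d\Sigma_t$ below by $\int_{\Sigma_t} d\Sigma_t$ combined with a Hölder/Minkowski-type step, and show that the deficit is precisely this constant squared.

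Concretely, I expect the argument to reduce to showing that
$$
Q(t) := \left( \int_{\Sigma_t} \rho\, d\Sigma_t \right)^2 - \omega_{n-1}^2 \left[ \left( \tfrac{n-1}{n} \right)^2 \left( \tfrac{|\Sigma_t|}{\omega_{n-1}} \right)^{\frac{2n}{n-1}} + \left( \tfrac{|\Sigma_t|}{\omega_{n-1}} \right)^{2} \right]
$$
is monotone in $t$ (non-decreasing, say), so that if $Q(0) \leq 0$ then $Q(t) \leq 0$ for all $t$; one then derives a contradiction with the asymptotic expansion of $Q(t)$ as $t \to \infty$, where $\Sigma_t$ is close to a coordinate geodesic sphere and the bracketed expression can be computed explicitly (for a geodesic sphere of radius $r$, $\int \rho\, d\Sigma = \omega_{n-1} \cosh r \sinh^{n-1} r$ and $|\Sigma| = \omega_{n-1}\sinh^{n-1} r$, and one checks that $Q \to +\infty$, or is eventually positive, with the strict inequality surviving). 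The strictness in (\ref{weak inequality}) comes from the fact that, under $H_1 \geq 1$ alone, the Newton–MacLaurin inequality used is strict unless $\Sigma$ is totally umbilic, and the factor $\left(\frac{n-1}{n}\right)^2 < 1$ already makes the inequality strict for geodesic spheres themselves.

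The main obstacle I anticipate is controlling the sign of the error terms in the evolution equation for $\int_{\Sigma_t}\rho\, d\Sigma_t$: the naive computation produces terms involving $p$ and $H_1^{-1}$ that are not obviously signed, and one must either invoke the Brendle–Hung–Wang inequality from \cite{BHW} (relating $\int_\Sigma \rho H_1$, $\int_\Sigma p$, and the area) or prove a standalone lemma that $p \geq 0$ is not needed but rather that a specific combination $\int_{\Sigma_t} (\rho H_1 - p)\, d\Sigma_t$ or similar has a definite sign for star-shaped $H_1 \geq 1$ hypersurfaces. Getting this sign right with only $H_1 \geq 1$, rather than full horospherical convexity, is where the $\left(\frac{n-1}{n}\right)^2$ loss is unavoidable, and pinning down that this is the optimal constant obtainable by this method — consistent with Theorem \ref{first theorem} showing the constant $1$ is false — is the delicate part.
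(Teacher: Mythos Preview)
Your proposal takes a fundamentally different route from the paper, and as written it has a genuine gap. The paper does \emph{not} use the inverse mean curvature flow at all. Instead it runs the hypersurface \emph{inward} via the support function flow $\partial_t\Psi = -p\,\xi$, which in the Poincar\'e ball model is simply the Euclidean homothety $\varphi_t = e^{-t}\psi_0$. Along this flow the quantity
\[
\mathcal{P}(\Sigma) \;=\; \frac{\mathcal{I}^2 - |\Sigma|^2}{\omega_{n-1}^{2}\,\bigl(|\Sigma|/\omega_{n-1}\bigr)^{2n/(n-1)}},
\qquad \mathcal{I}=\int_\Sigma \rho\,d\Sigma,
\]
is shown to be nonincreasing (a one-line H\"older argument on $\int\rho^2$), and its limit as $t\to\infty$ is computed explicitly: it equals the purely Euclidean, scale-invariant quantity
\[
\mathcal{Q}(\Sigma_0)\;=\;\left[\omega_{n-1}\Bigl(\tfrac{|\Sigma_0|_\delta}{\omega_{n-1}}\Bigr)^{\frac{n+1}{n-1}}\right]^{-1}\int_{\Sigma_0}|x|^2\,(d\Sigma_0)_\delta.
\]
The hypothesis $H_1\ge 1$ enters only through Proposition~\ref{mean curvatures}, which says that $\Sigma$, viewed as a Euclidean hypersurface in the ball, is strictly mean-convex; the paper then invokes the Euclidean inequality $\mathcal{Q}>\bigl(\tfrac{n-1}{n}\bigr)^{2}$ for star-shaped mean-convex hypersurfaces from \cite{GR}. \emph{That} is where the constant $\bigl(\tfrac{n-1}{n}\bigr)^{2}$ actually comes from --- not from Newton--MacLaurin.

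The gap in your outline is precisely there: you never identify a mechanism producing $\bigl(\tfrac{n-1}{n}\bigr)^{2}$. You speculate it enters when bounding $\int H_1$ from below, but $H_1\ge 1$ gives $\int H_1\ge |\Sigma|$ with no loss at all, and the Newton--MacLaurin inequalities do not yield this particular constant either. Moreover, the monotonicity of your $Q(t)$ under IMCF is asserted rather than shown; the IMCF evolution $\tfrac{d\mathcal{I}}{dt}=(n-1)\mathcal{I}+\int p/H_1$ does not obviously make any such quantity monotone, and if the argument went through with constant $1$ it would contradict Theorem~\ref{first theorem}. So some nontrivial external input is required, and your proposal does not supply one. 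The paper's input is the Euclidean inequality of \cite{GR}, accessed by flowing inward to a point rather than outward to infinity.
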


We now state our third and final main result, which is an inequality very similar to (\ref{GWW inequality}).

\begin{theorem} \label{third theorem}
If $\Sigma$ is a horospherically convex hypersurface in $\mathbb{H}^n$ and $k \in \lbrace 0,1, \ldots, n-1 \rbrace$ is even, then it holds
$$
\int_\Sigma \rho H_k \, d\Sigma >
\omega_{n-1} \left[ \left( \frac{n-1}{n} \right)^2 \left( \frac{|\Sigma|}{\omega_{n-1}} \right)^{\frac{2n}{(k+1)(n-1)}} + \left( \frac{|\Sigma|}{\omega_{n-1}} \right)^{\frac{2(n-k-1)}{(k+1)(n-1)}} \right]^{\frac{k+1}{2}}.
$$
\end{theorem}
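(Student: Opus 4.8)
The plan is to run the induction $j\mapsto j+2$ of Ge, Wang and Wu, the only new features relative to the odd case of~\cite{GWW2} being that the base case $k=0$ is now the weakened Theorem~\ref{second theorem} (rather than the false conjecture~(\ref{conjecture})), and that the loss by the factor $\left(\tfrac{n-1}{n}\right)^2<1$ incurred there is both preserved by the induction step and the source of the strictness in the statement. Write $a=\left(|\Sigma|/\omega_{n-1}\right)^{2/(n-1)}$, $c=(n-1)/n$, and $\mathcal{I}_k=\tfrac{1}{\omega_{n-1}}\int_\Sigma\rho H_k\,d\Sigma$; dividing by $\omega_{n-1}$ and simplifying the bracket, the assertion of Theorem~\ref{third theorem} for a given $k$ is equivalent to
\begin{equation}\label{tgt}
\mathcal{I}_k\ >\ a^{(n-1-k)/2}\left(c^2a+1\right)^{(k+1)/2}.
\end{equation}
For the base case $k=0$: a horospherically convex hypersurface has all principal curvatures $\geq1$, hence $H_1=\tfrac{1}{n-1}\sum_i\lambda_i\geq1$, and it is star-shaped (with respect to the origin, which we assume to lie in the domain it bounds); thus Theorem~\ref{second theorem} gives exactly~(\ref{tgt}) with $k=0$. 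Also, horospherical convexity makes $\mathcal{I}_k>0$ for every $k$, since $\rho\geq1$ and $H_k\geq H_{k-1}\geq\cdots\geq H_0=1$.

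The one substantial ingredient is the inequality that powers the induction of~\cite{GWW2}: for a horospherically convex hypersurface in $\mathbb{H}^n$ and $0\leq j\leq n-3$,
\begin{equation}\label{step}
\mathcal{I}_{j+2}\ \geq\ \bigl(1+a^{-1}\bigr)\,\mathcal{I}_j ,
\end{equation}
an equality for geodesic spheres centered at the origin. One may either invoke the induction step of~\cite{GWW2} directly, or deduce~(\ref{step}) by combining the hyperbolic Minkowski formula $\int_\Sigma\bigl(\rho H_{m-1}-pH_m\bigr)\,d\Sigma=0$ with the Newton--MacLaurin inequalities and the (unweighted) Alexandrov--Fenchel inequalities in $\mathbb{H}^n$. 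Granting this, fix an even $k$ with $2\leq k\leq n-1$ and assume~(\ref{tgt}) for $k-2$, i.e. $\mathcal{I}_{k-2}>a^{(n-k+1)/2}\left(c^2a+1\right)^{(k-1)/2}$. Applying~(\ref{step}) with $j=k-2$ (so $j\leq n-3$) and then the inductive hypothesis,
\begin{align*}
\mathcal{I}_k\ &\geq\ \bigl(1+a^{-1}\bigr)\,\mathcal{I}_{k-2}\\
&>\ \bigl(1+a^{-1}\bigr)\,a^{(n-k+1)/2}\left(c^2a+1\right)^{(k-1)/2}\\
&=\ (a+1)\,a^{(n-k-1)/2}\left(c^2a+1\right)^{(k-1)/2}.
\end{align*}
Since $a>0$ and $c<1$ we have $a+1>c^2a+1$, so $\mathcal{I}_k>a^{(n-k-1)/2}\left(c^2a+1\right)^{(k+1)/2}$, which is~(\ref{tgt}) for $k$. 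This closes the induction and proves Theorem~\ref{third theorem}.

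The genuinely geometric point — and the anticipated main obstacle — is inequality~(\ref{step}) (this is what reduces the whole statement to the case $k=0$): it is where horospherical convexity, as opposed to mere star-shapedness, must be used, and it is the same mechanism that underlies the odd-$k$ theorem of~\cite{GWW2}. In the odd case, (\ref{step}) transmits the sharp bound with no slack and the equality case passes down from geodesic spheres; here the factor $c^2<1$ is already present in the base case, and because $a+1>c^2a+1$ strictly, the inequality at every subsequent even step is strict — which is precisely why Theorem~\ref{third theorem} has a strict inequality and no equality case. Everything else is the routine algebra carried out above.
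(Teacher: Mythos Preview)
Your overall architecture is right --- base case $k=0$ from Theorem~\ref{second theorem}, then the $j\mapsto j+2$ induction of~\cite{GWW2} --- and your algebraic bookkeeping with $a$ and $c$ is clean. The gap is your inequality~(\ref{step}), namely $\mathcal{I}_{j+2}\geq(1+a^{-1})\mathcal{I}_j$. This is \emph{not} what the induction step of~\cite{GWW2} says, and your alternative sketch (Minkowski formula plus Newton--Maclaurin plus unweighted hyperbolic Alexandrov--Fenchel) does not produce it in any way I can reconstruct; note that Newton--Maclaurin gives $H_{j+1}^2\geq H_jH_{j+2}$, which bounds $H_{j+2}/H_j$ from \emph{above}, and the Minkowski identity $\int\rho H_{m-1}=\int pH_m$ trades $\rho$ for the support function $p$, which is not controlled from below in a way that yields your claim. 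If~(\ref{step}) were available, both the odd and even cases would be one-line inductions, and the elaborate argument in~\cite{GWW2} would be unnecessary.

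What the paper (following~\cite{GWW2}) actually uses is the pair of inequalities
\[
\int_\Sigma \rho H_{2j+2}\,d\Sigma-\int_\Sigma \frac{H_{2j+2}}{\rho}\,d\Sigma\ \geq\ \int_\Sigma \rho H_{2j}\,d\Sigma
\qquad\text{and}\qquad
\int_\Sigma H_{2j+2}\,d\Sigma\ \geq\ |\Sigma|\bigl(1+a^{-1}\bigr)^{j+1},
\]
the first being Theorem~8.1 of~\cite{GWW2} and the second the unweighted inequality of~\cite{GWW3}. One then applies H\"older to get $\bigl(\int\rho H_{2j+2}\bigr)\bigl(\int H_{2j+2}/\rho\bigr)\geq\bigl(\int H_{2j+2}\bigr)^2>\alpha$ with $\alpha=|\Sigma|^2\bigl(c^2+a^{-1}\bigr)^{2(j+1)}$, and combines the two displayed inequalities via the increasing function $f(t)=t-\alpha t^{-1}$: the first gives $f\bigl(\int\rho H_{2j+2}\bigr)>\,$(inductive lower bound for $\int\rho H_{2j}$), while a direct computation shows $f$ of the target equals $c^2$ times that same lower bound, whence the target is exceeded. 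This is where the factor $c^2<1$ is genuinely exploited and where the strictness enters. Your proposal should replace the unjustified~(\ref{step}) by this mechanism; once you do, the argument goes through exactly as in the paper.
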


\section{Variation formulae}
Let $\psi_0: \Sigma \to \mathbb{H}^n$ be a closed, isometrically immersed oriented hypersurface.
We consider a one-parameter family $\Psi(t,\cdot): \Sigma \to \mathbb{H}^n$ of isometrically immersed hypersurfaces evolving according to
\begin{equation} \label{extrinsic flow}
    \frac{\partial \Psi}{\partial t} = F\xi,
\end{equation}
with $\Psi(0, \cdot) = \psi_0$, where $\xi$ is the outward unit normal to $\Psi(t,\cdot): \Sigma \to \mathbb{H}^n$ and $F$ is a general speed function.

\begin{proposition}
Along the flow (\ref{extrinsic flow}), the following evolution equations hold:
\begin{itemize}
\item[] The area element $d\Sigma$ evolves as
\begin{equation} \label{evolution area element}
\frac{\partial}{\partial t} d\Sigma = F \sigma_1 d\Sigma.
\end{equation}
In particular, $|\Sigma|$, the area of $\Sigma$, evolves as
\begin{equation} \label{evolution area}
\frac{d}{dt} |\Sigma| = \int_\Sigma F \sigma_1 \, d\Sigma.
\end{equation}
\item[] The function $\rho$ evolves as
\begin{equation} \label{evolution rho}
    \frac{\partial \rho}{\partial t} = pF.
\end{equation}
\end{itemize}
\end{proposition}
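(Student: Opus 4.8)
The plan is to carry out the standard first-variation computations for a hypersurface moving in the normal direction, treating $\Sigma$ as a fixed abstract manifold on which the immersion $\Psi(t,\cdot)$, the induced metric $g_{ij}(t)$, and the pulled-back function $\rho\circ\Psi(t,\cdot)$ all depend on $t$. Throughout I would fix local coordinates $\{x^i\}$ on $\Sigma$, write $g_{ij}=\langle\partial_i\Psi,\partial_j\Psi\rangle$ for the induced metric, and let $h_{ij}$ denote the second fundamental form with the convention that makes the Weingarten relation read $D_{\partial_i}\xi=h_i^{\,k}\partial_k\Psi$, so that $g^{ij}h_{ij}=\sigma_1$ is the (unnormalized) sum of the principal curvatures.

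First I would compute the evolution of the induced metric. Differentiating $g_{ij}$ in $t$ and commuting $\partial_t$ with $\partial_i$ (the coordinate vector fields on the fixed manifold commute with $\partial_t$), I get $\partial_t g_{ij}=\langle D_{\partial_i}(F\xi),\partial_j\Psi\rangle+\langle\partial_i\Psi,D_{\partial_j}(F\xi)\rangle$. The terms $(\partial_i F)\langle\xi,\partial_j\Psi\rangle$ vanish because $\xi$ is normal, so only the Weingarten terms survive and $\partial_t g_{ij}=2Fh_{ij}$. The one point requiring care is the sign/normalization convention in the Weingarten relation; fixing it so that a geodesic sphere with outward normal has positive principal curvatures is what produces the correct sign in the final formula.

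Next I would pass from the metric to the area element. Using the standard identity $\partial_t\sqrt{\det g}=\tfrac12\sqrt{\det g}\,g^{ij}\partial_t g_{ij}$ (Jacobi's formula for the derivative of a determinant), I substitute $\partial_t g_{ij}=2Fh_{ij}$ to obtain $\partial_t\sqrt{\det g}=F\,g^{ij}h_{ij}\sqrt{\det g}=F\sigma_1\sqrt{\det g}$. Since $d\Sigma=\sqrt{\det g}\,dx$ and $dx$ is $t$-independent, this is exactly \eqref{evolution area element}. Equation \eqref{evolution area} then follows immediately: because $\Sigma$ is closed and the domain of integration is the fixed abstract manifold, I may differentiate under the integral sign, giving $\tfrac{d}{dt}|\Sigma|=\int_\Sigma\partial_t\,d\Sigma=\int_\Sigma F\sigma_1\,d\Sigma$.

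Finally, for \eqref{evolution rho}, I regard $\rho$ as the pullback $\rho\circ\Psi(t,\cdot)$ of the ambient function and apply the chain rule: $\partial_t(\rho\circ\Psi)=\langle D\rho,\partial_t\Psi\rangle=\langle D\rho,F\xi\rangle=F\langle D\rho,\xi\rangle=pF$, where the last equality is the definition $p=\langle D\rho,\xi\rangle$ of the support function. There is no genuine obstacle in any of these steps; the computations are routine, and the only thing to watch is the consistency of the second-fundamental-form sign convention (used in $\partial_t g_{ij}=2Fh_{ij}$) with the definition of $\sigma_1$ as the first elementary symmetric function of the principal curvatures, so that $\sigma_1$ rather than $-\sigma_1$ appears in \eqref{evolution area element}.
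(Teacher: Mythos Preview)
Your proof is correct and self-contained; the paper itself does not give an argument but simply cites \cite{huisken} for \eqref{evolution area element}--\eqref{evolution area} and \cite{dLG2} for \eqref{evolution rho}, and the computations you carry out are precisely the standard ones that appear in those references.
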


\begin{proof}
Formulas (\ref{evolution area element}) and (\ref{evolution area}) are well known (see, for example, \cite{huisken}). Equation (\ref{evolution rho}) is proven, for example, in \cite{dLG2} (Proposition 3.2).
\end{proof}

Of particular interest to us is the case $F=-p$, so that $\Sigma$ evolves according to
\begin{equation} \label{sff}
\frac{\partial \Psi}{\partial t} = -p\xi.
\end{equation}
This flow will be called \emph{support function flow} (SFF). 

From now on we use the Poincar\'e ball model to represent the hyperbolic space.

Next we consider, for each $t \in [0,\infty)$, the hypersurface $\varphi_t: \Sigma \to \mathbb{B}^n$ defined by
\begin{equation} \label{varphi_t}
\varphi_t = e^{-t} \psi_0.
\end{equation}
Notice that if $\Phi: \mathbb{R} \times \Sigma \to \mathbb{B}^n$ is defined by
$$ \Phi (t,p) = \varphi_t(p), $$
then it satisfies the differential equation
\begin{equation} \label{homothetic flow}
\frac{\partial \Phi}{\partial t} = -\Phi.
\end{equation}

We have that (\ref{varphi_t}) defines, for any hypersurface $\Sigma_0$ in $\mathbb{H}^n$ a $1$-parameter family $\lbrace \Sigma_t \rbrace_{t \geq 0}$ of hypersurfaces in $\mathbb{H}^n$. Whenever no confusion arises, we will write only $\Sigma$ to denote $\Sigma_t$.

\begin{remark}
{\rm Notice that, from the Euclidean point of view (that is, by endowing $\mathbb{B}^n$ with the Euclidean metric $\delta$), $\Sigma_t$ is just the image of $\Sigma_0$ under the homothety of center in the origin and ratio $e^{-t}$.}
\end{remark}

\begin{proposition}\label{sff exists for all time}
The flow (\ref{sff}) exists for all time.
\end{proposition}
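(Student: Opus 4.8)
The plan is to show that the support function flow (\ref{sff}) is nothing but the homothetic flow (\ref{homothetic flow}) --- equivalently, the family $\varphi_t=e^{-t}\psi_0$ of (\ref{varphi_t}) --- read up to a time-dependent reparametrization of $\Sigma$. Since the latter is manifestly defined for every $t\in[0,\infty)$, this gives all-time existence for (\ref{sff}). The conceptual point is that although a general flow $\partial_t\Psi=F\xi$ may develop singularities in finite time, the particular speed $F=-p$ secretly encodes a Euclidean homothety centered at the origin, and such homotheties never leave $\mathbb{B}^n$.

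The one computation that makes this work is the identity $D\rho=x$ in the Poincar\'e ball model, i.e.\ the hyperbolic gradient of $\rho$ is the Euclidean position vector field. Writing the metric as $\lambda^2\delta$ with $\lambda=2/(1-|x|^2)$, one has $\partial_i\rho=4x_i/(1-|x|^2)^2$, and the conformal factor in $g^{ij}=\lambda^{-2}\delta^{ij}$ exactly cancels it, leaving $D\rho=x^i\partial_i$. Hence, along $\varphi_t$, the velocity $\partial_t\Phi=-\Phi=-x$ decomposes as
$$
\partial_t\Phi=-p\,\xi+X_t,
$$
where $\langle -x,\xi\rangle\,\xi=-\langle D\rho,\xi\rangle\,\xi=-p\,\xi$ is its hyperbolic-normal part and $X_t$ is tangent to $\Sigma_t=\varphi_t(\Sigma)$. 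In particular the homothetic flow has the same normal speed, $-p$, as (\ref{sff}).

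It remains to absorb the tangential term $X_t$ by a reparametrization. Pulling $X_t$ back via $\varphi_t$ yields a smooth time-dependent vector field $Y_t=(d\varphi_t)^{-1}X_t$ on $\Sigma$, defined for all $t\in[0,\infty)$; since $\Sigma$ is compact, integrating $-Y_t$ (with $\eta_0=\mathrm{id}$) produces diffeomorphisms $\eta_t\colon\Sigma\to\Sigma$ for every $t\geq 0$. A direct computation then gives $\partial_t(\varphi_t\circ\eta_t)=-p\,\xi$ with $\varphi_0\circ\eta_0=\psi_0$, so $\Psi_t:=\varphi_t\circ\eta_t$ solves (\ref{sff}); and as both $\varphi_t$ and $\eta_t$ are defined for all $t\in[0,\infty)$, so is $\Psi_t$. (Uniqueness, if desired, follows from standard ODE theory applied to the reparametrization.)

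The only genuine content here is the identity $D\rho=x$; once that is available the argument is soft, because $\varphi_t(\Sigma)=e^{-t}\psi_0(\Sigma)$ stays inside the compact set $\{|x|\leq e^{-t}\sup_\Sigma|\psi_0|\}\subset\mathbb{B}^n$ and remains an immersion for all $t$. The point requiring care is that the reparametrizing diffeomorphisms $\eta_t$ exist for \emph{all} time --- this is exactly where compactness of $\Sigma$ enters --- together with checking that the splitting $\partial_t\Phi=-p\,\xi+X_t$ is smooth in $(t,x)$ so that the ODE defining $\eta_t$ has smooth coefficients.
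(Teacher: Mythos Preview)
Your proof is correct and follows essentially the same approach as the paper: both identify $D\rho$ with the position vector field $x$ in the Poincar\'e model, observe that the normal component of the homothetic flow $\partial_t\Phi=-\Phi$ is exactly $-p\,\xi$, and then absorb the tangential part by a time-dependent reparametrization of $\Sigma$. The paper outsources the reparametrization step to Proposition~1.3.4 of Mantegazza's book, whereas you have written it out explicitly (including the appeal to compactness of $\Sigma$ for global existence of $\eta_t$), but the argument is the same.
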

\begin{proof}
By the same argument given in Proposition 1.3.4 of  \cite{mantegazza}, as long as the flow (\ref{homothetic flow}) exists, then the flow
\begin{equation} \label{nhf}
\frac{\partial \Phi}{\partial t} = -\langle \Phi, \xi \rangle \xi
\end{equation}
also exists. Since (\ref{homothetic flow}) exists for all time, (\ref{nhf}) also exists for all time.
However, when working with the ball model, a simple computation shows that 
$$
D \rho = X,
$$
where $X$ is the vector field that associates to each $x \in \mathbb{B}^n$ the vector $x$. Thus, $\langle \Phi, \xi \rangle$ is the support function and the flow (\ref{nhf}) coincides with the flow (\ref{sff}).
\end{proof}

\begin{remark}
{\rm The argument given in Proposition 1.3.4 of \cite{mantegazza} actually shows that the flows (\ref{sff}) and (\ref{homothetic flow}) are, up to reparametrization, the same flow. For this reason, we will abuse notation and also denote by $\lbrace \Sigma_t \rbrace_{t \geq 0}$ the 1-parameter family of hypersurfaces defined by (\ref{sff}). Again, whenever no confusion arises, we will write only $\Sigma$ to denote $\Sigma_t$.}
\end{remark}

For a hypersurface $\Sigma$ in $\mathbb{H}^n$ we define the quantity $I(\Sigma)$ by
$$
    \mathcal{I} = \int_\Sigma \rho \, d\Sigma.
$$

\begin{proposition}
Along the flow (\ref{sff}) the following evolution equations hold:
\begin{itemize}
\item[] The area $|\Sigma|$ evolves as
\begin{equation} \label{evolution area SFF}
    \frac{d}{dt}|\Sigma| = -(n-1) \mathcal{I}.
\end{equation}
\item[] The quantity $\mathcal{I}$ evolves as
\begin{equation} \label{evolution total rho SFF}
\frac{d \mathcal{I}}{dt} = |\Sigma| - n \int_\Sigma \rho^2 \, d\Sigma.
\end{equation}
\end{itemize}
\end{proposition}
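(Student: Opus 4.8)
The plan is to differentiate the two quantities under the integral sign using the variation formulas already recorded in the first Proposition of this section, specialized to the speed $F=-p$, and then to dispose of the resulting curvature terms by two applications of the divergence theorem on the closed hypersurface $\Sigma$ to suitable tangential vector fields. The two facts about $\rho$ that make everything work are that, in the ball model, $D\rho$ equals the Euclidean position field $X\colon x\mapsto x$, which satisfies $D_Y(D\rho)=\rho\,Y$ for every tangent vector $Y$ (equivalently $D^2\rho=\rho\langle\,\cdot\,,\,\cdot\,\rangle$, as is standard for $\rho=\cosh r$ in $\mathbb{H}^n$), and that $|D\rho|^2=\rho^2-1$.

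I would first treat the area. By (\ref{evolution area}) with $F=-p$ one has $\frac{d}{dt}|\Sigma|=-\int_\Sigma p\,\sigma_1\,d\Sigma$, so it suffices to prove the Minkowski-type identity $\int_\Sigma p\,\sigma_1\,d\Sigma=(n-1)\mathcal I$. Decompose $X=X^T+p\,\xi$ along $\Sigma$, where $X^T$ is the tangential part and $p=\langle X,\xi\rangle=\langle D\rho,\xi\rangle$ is the support function. Differentiating this decomposition in a tangent direction $e_i$, using $D_{e_i}X=\rho\,e_i$ together with the Weingarten equation, and tracing gives $\operatorname{div}_\Sigma X^T=(n-1)\rho-p\,\sigma_1$. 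Integrating over the closed $\Sigma$ makes the left-hand side vanish, which yields the identity and hence (\ref{evolution area SFF}).

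For $\mathcal I$, I would differentiate $\mathcal I=\int_\Sigma\rho\,d\Sigma$ using (\ref{evolution rho}) and (\ref{evolution area element}) with $F=-p$:
\begin{equation*}
\frac{d\mathcal I}{dt}=\int_\Sigma\Bigl(\frac{\partial\rho}{\partial t}+\rho F\sigma_1\Bigr)\,d\Sigma=\int_\Sigma\bigl(pF+\rho F\sigma_1\bigr)\,d\Sigma=-\int_\Sigma p^2\,d\Sigma-\int_\Sigma\rho\,p\,\sigma_1\,d\Sigma.
\end{equation*}
Now apply the divergence theorem to the tangential field $\rho\,X^T$. Since $\operatorname{div}_\Sigma(\rho X^T)=\rho\operatorname{div}_\Sigma X^T+\langle D\rho,X^T\rangle$ and $\langle D\rho,X^T\rangle=\langle X,X^T\rangle=|X^T|^2=|X|^2-p^2=\rho^2-1-p^2$, the formula for $\operatorname{div}_\Sigma X^T$ obtained above gives $\operatorname{div}_\Sigma(\rho X^T)=n\rho^2-\rho\,p\,\sigma_1-p^2-1$. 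Integrating over the closed $\Sigma$ produces $\int_\Sigma\bigl(\rho\,p\,\sigma_1+p^2\bigr)\,d\Sigma=n\int_\Sigma\rho^2\,d\Sigma-|\Sigma|$, and substituting this into the displayed identity yields exactly (\ref{evolution total rho SFF}).

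The computation is elementary once the right vector fields are chosen; there is no genuine obstacle. The only points requiring care are keeping the sign conventions for $\xi$ and the shape operator consistent with the one used in (\ref{evolution area element}) and the Weingarten equation, and observing that it is the weighted field $\rho\,X^T$, rather than $X^T$ itself, whose divergence produces the term $n\int_\Sigma\rho^2\,d\Sigma$ through the contribution of $|X^T|^2=\rho^2-1-p^2$.
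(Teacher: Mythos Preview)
Your proof is correct and follows essentially the same route as the paper's. The paper quotes the identities $\Delta_\Sigma\rho=(n-1)\rho-p\sigma_1$ and $\rho^2=1+p^2+|\nabla\rho|^2$ from \cite{GWW2} and then integrates the first (respectively, multiplies it by $\rho$ and integrates by parts, combining with the second) to reach (\ref{evolution area SFF}) and (\ref{evolution total rho SFF}); since $X^T=\nabla\rho$, your divergence computations for $X^T$ and $\rho X^T$ are exactly these two steps, with the small bonus that you derive the identities directly from $D^2\rho=\rho\,g$ and $|D\rho|^2=\rho^2-1$ rather than citing them.
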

\begin{proof}
We have
\begin{equation} \label{first minkowski}
\Delta_\Sigma \rho = (n-1)\rho -p \sigma_1
\end{equation}
and
\begin{equation} \label{eq 7.3 GWW}
\rho^2 = 1 + p^2 + \langle \nabla \rho, \nabla \rho \rangle.    
\end{equation}
Identities (\ref{first minkowski}) and (\ref{eq 7.3 GWW}) are proven, for example, in \cite{GWW2} (Lemma 7.1). Integrating (\ref{first minkowski}) we get
\begin{equation} \label{integrated first minkowski}
(n-1)\mathcal{I} = \int_\Sigma p \sigma_1 \, d\Sigma.    
\end{equation}
Equation (\ref{evolution area SFF}) follows from (\ref{evolution area}) and (\ref{integrated first minkowski}). Multiplying (\ref{first minkowski}) by $\rho$ and integrating yields 
\begin{equation} \label{ibp}
-\int_\Sigma \langle \nabla \rho, \nabla \rho \rangle \, d\Sigma = (n-1) \int_\Sigma \rho^2 \, d\Sigma - \int_\Sigma \rho p \sigma_1 \, d\Sigma.    
\end{equation}
Using (\ref{evolution rho}), (\ref{evolution area element}), (\ref{ibp}) and (\ref{eq 7.3 GWW}) we find
\begin{align*}
\frac{d\mathcal{I}}{dt}
& = \int_\Sigma \frac{\partial \rho}{\partial t} \, d\Sigma + \int_\Sigma \rho \frac{\partial}{\partial t} d\Sigma \\
& = - \int_\Sigma p^2 \, d\Sigma - \int_\Sigma \rho p \sigma_1 \, d\Sigma \\
& = - \int_\Sigma p^2 \, d\Sigma - \int_\Sigma \langle \nabla \rho, \nabla \rho \rangle \, d\Sigma - (n-1) \int_\Sigma \rho^2 \, d\Sigma \\
& = |\Sigma| - n\int_\Sigma \rho^2 \, d\Sigma,
\end{align*}
as wished.
\end{proof}

For a hypersurface $\Sigma$ in $\mathbb{H}^n$, define the quantity $ \mathcal{P}(\Sigma)$ by
\begin{equation} \label{P}
    \mathcal{P} = \left[ \omega_{n-1} \left( \frac{|\Sigma|}{\omega_{n-1}}\right)^{\frac{n}{n-1}} \right]^{-2} \left[ 
   \mathcal{I}^2 - |\Sigma|^2 \right].
\end{equation}

\begin{proposition} \label{prop P is nonincreasing}
Along the flow (\ref{sff}) it holds
$$
    \frac{d \mathcal{P}}{dt} \leq 0.
$$
Moreover, the equality holds at $t$ if and only if $\Sigma_t$ is a geodesic sphere centered at the origin.
\end{proposition}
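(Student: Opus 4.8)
The plan is to differentiate $\mathcal{P}$ along the flow (\ref{sff}) directly, feeding in only the two evolution equations (\ref{evolution area SFF}) and (\ref{evolution total rho SFF}), and to watch the resulting terms collapse into one manifestly signed expression. First I would rewrite the normalizing constant in (\ref{P}) in the cleaner form
\[
\left[ \omega_{n-1} \left( \frac{|\Sigma|}{\omega_{n-1}}\right)^{\frac{n}{n-1}} \right]^{-2} = \omega_{n-1}^{\frac{2}{n-1}}\, |\Sigma|^{-\frac{2n}{n-1}},
\]
so that, writing $A = |\Sigma|$ and $\alpha = \tfrac{2n}{n-1}$, we have $\mathcal{P} = \omega_{n-1}^{2/(n-1)} A^{-\alpha}(\mathcal{I}^2 - A^2)$. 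The reason for this reorganization is that the exponent $\alpha$ satisfies $\alpha(n-1) = 2n$, which is exactly the identity that will drive the cancellation.

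Carrying out the differentiation, $\dot{\mathcal{P}} = \omega_{n-1}^{2/(n-1)}\bigl[-\alpha A^{-\alpha-1}\dot A(\mathcal{I}^2-A^2) + A^{-\alpha}(2\mathcal{I}\dot{\mathcal{I}} - 2A\dot A)\bigr]$. Substituting $\dot A = -(n-1)\mathcal{I}$ from (\ref{evolution area SFF}) and $\dot{\mathcal{I}} = A - n\int_\Sigma \rho^2\,d\Sigma$ from (\ref{evolution total rho SFF}), and using $\alpha(n-1)=2n$, the first bracketed term becomes $2n\mathcal{I}A^{-\alpha-1}(\mathcal{I}^2 - A^2)$ while the second becomes $2n\mathcal{I}A^{-\alpha}\bigl(A - \int_\Sigma\rho^2\,d\Sigma\bigr)$. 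Adding them and factoring out $A^{-\alpha-1}$, the $-A^2$ from the first term cancels the $+A^2$ from the second, and one is left with
\[
\frac{d\mathcal{P}}{dt} = 2n\,\omega_{n-1}^{\frac{2}{n-1}}\, \mathcal{I}\, |\Sigma|^{-\frac{2n}{n-1}-1}\left( \left(\int_\Sigma \rho\, d\Sigma\right)^2 - |\Sigma|\int_\Sigma \rho^2\, d\Sigma \right).
\]

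To conclude, I observe that $\mathcal{I} = \int_\Sigma \rho\, d\Sigma \geq |\Sigma| > 0$ since $\rho = \cosh r \geq 1$, so the prefactor $2n\,\omega_{n-1}^{2/(n-1)}\,\mathcal{I}\,|\Sigma|^{-2n/(n-1)-1}$ is strictly positive; the sign of $d\mathcal{P}/dt$ is therefore governed by $\bigl(\int_\Sigma \rho\bigr)^2 - |\Sigma|\int_\Sigma \rho^2$, which is $\leq 0$ by the Cauchy--Schwarz inequality applied to $\rho$ and the constant function $1$. Hence $d\mathcal{P}/dt \leq 0$. Equality at time $t$ forces equality in Cauchy--Schwarz, i.e. $\rho$ constant on $\Sigma_t$; since the level sets of $\rho = \cosh r$ are precisely the geodesic spheres centered at the origin and $\Sigma_t$ is closed, this means $\Sigma_t$ is such a sphere, and conversely every such sphere visibly gives equality. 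I do not expect a genuine obstacle: the entire content is the bookkeeping of the exponents and the recognition that the evolution equations (\ref{evolution area SFF}) and (\ref{evolution total rho SFF}) are engineered so that the $A^2$ terms annihilate; the only point needing a sentence of care is the equality case, where one identifies ``$\rho$ constant along $\Sigma_t$'' with ``$\Sigma_t$ is a centered geodesic sphere''.
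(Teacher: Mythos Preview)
Your proof is correct and follows essentially the same approach as the paper: differentiate $\mathcal{P}$, substitute the evolution equations (\ref{evolution area SFF}) and (\ref{evolution total rho SFF}), and use Cauchy--Schwarz/H\"older on $\rho$ to obtain the sign, with the equality case identified as $\rho$ constant on $\Sigma_t$. The only cosmetic difference is that the paper applies H\"older to $\dot{\mathcal{I}}$ before substituting, whereas you carry the exact expression through and obtain the closed formula $\dot{\mathcal{P}} = 2n\,\omega_{n-1}^{2/(n-1)}\,\mathcal{I}\,|\Sigma|^{-\frac{2n}{n-1}-1}\bigl(\mathcal{I}^2 - |\Sigma|\int_\Sigma\rho^2\,d\Sigma\bigr)$, which is slightly more informative but amounts to the same argument.
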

\begin{proof}
First, note that H\"older's inequality applied to (\ref{evolution total rho SFF}) gives
\begin{equation} \label{holder}
\frac{d\mathcal{I}}{dt} \leq |\Sigma| - n\dfrac{\mathcal{I}^2}{|\Sigma|},
\end{equation}
with the equality holding if and only if $\rho$ is constant on $\Sigma$, that is, if and only if $\Sigma$ is a geodesic sphere centered at the origin.

Now, a straightforward computation together with (\ref{holder}), (\ref{evolution area SFF}) and (\ref{evolution total rho SFF}) yields 
\begin{align}
    \dfrac{d\mathcal{P}}{dt} 
    & = \dfrac{ \left[  2\mathcal{I} \dfrac{ d \mathcal{I}}{dt} -2|\Sigma| \dfrac{d}{dt}|\Sigma| -  \dfrac{2n}{n-1}\left( \mathcal{I}^2 - |\Sigma|^2 \right)|\Sigma|^{-1} \dfrac{d}{dt}|\Sigma| \right]}{\left[ \omega \left( \dfrac{|\Sigma|}{\omega} \right)^{\frac{n}{n-1}} \right]^{2}} \nonumber \\
    & \leq
    \dfrac{ \left[  2\mathcal{I} \left( |\Sigma| - n\dfrac{\mathcal{I}^2}{|\Sigma|} \right) -2|\Sigma| \dfrac{d}{dt}|\Sigma| -  \dfrac{2n}{n-1}\left( \mathcal{I}^2 - |\Sigma|^2 \right)|\Sigma|^{-1} \dfrac{d}{dt}|\Sigma| \right]}{\left[ \omega \left( \dfrac{|\Sigma|}{\omega} \right)^{\frac{n}{n-1}} \right]^{2}} \label{equality derivative P} \\
    & = 0. \nonumber
\end{align}
The equality holds in (\ref{equality derivative P}) if and only if it also holds in (\ref{holder}), which occurs if and only if $\Sigma$ is a geodesic sphere centered at the origin.
\end{proof}

\begin{proposition} \label{prop P noinicreasing 2}
Along the flow (\ref{homothetic flow}) the quantity $\mathcal{P}$ defined by (\ref{P}) satisfies
$$
    \frac{d \mathcal{P}}{dt} \leq 0.
$$
Furthermore, the equality holds at $t$ if and only if $\Sigma_t$ is a geodesic sphere centered at the origin.
\end{proposition}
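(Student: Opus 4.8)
The plan is to deduce this from Proposition~\ref{prop P is nonincreasing} by observing that the homothetic flow (\ref{homothetic flow}) and the support function flow (\ref{sff}) sweep out \emph{the same} one-parameter family of hypersurfaces. First I would recall, as in the remark following Proposition~\ref{sff exists for all time}, that in the ball model $D\rho = X$, so the normal component of the velocity of (\ref{homothetic flow}) is $\langle -\Phi,\xi\rangle = -\langle X,\xi\rangle = -p$. Thus (\ref{homothetic flow}) differs from (\ref{sff}) only by a tangential vector field, and by the argument of Proposition~1.3.4 of \cite{mantegazza} a purely tangential perturbation of the velocity changes only the parametrization of the hypersurfaces, not the subsets $\Sigma_t \subset \mathbb{B}^n$ themselves. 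Hence the families $\{\Sigma_t\}_{t\geq 0}$ produced by (\ref{sff}) and by (\ref{homothetic flow}) coincide.

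Now $\mathcal{I}=\int_\Sigma \rho\,d\Sigma$, the area $|\Sigma|$, and therefore the quantity $\mathcal{P}$ defined in (\ref{P}) depend only on the hypersurface $\Sigma_t$ and not on how it is parametrized. Consequently the function $t\mapsto\mathcal{P}(\Sigma_t)$ is literally the same whether one evolves by (\ref{sff}) or by (\ref{homothetic flow}), and Proposition~\ref{prop P is nonincreasing} immediately yields $\frac{d\mathcal{P}}{dt}\leq 0$ along (\ref{homothetic flow}), with equality at time $t$ precisely when $\Sigma_t$ is a geodesic sphere centered at the origin.

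Alternatively, and bypassing any appeal to reparametrization, I would argue directly: writing the velocity of (\ref{homothetic flow}) as $-\Phi = -p\,\xi + (\text{tangential})$, the tangential term contributes nothing to the time derivatives of the integral quantities $|\Sigma|$ and $\mathcal{I}$ over the closed hypersurface $\Sigma$, so the evolution equations (\ref{evolution area SFF}) and (\ref{evolution total rho SFF}) hold verbatim along (\ref{homothetic flow}); the H\"older estimate (\ref{holder}) and the computation ending in (\ref{equality derivative P}) then go through without change. The only real subtlety, and it is a minor one, is justifying that the tangential terms may be discarded, i.e.\ that $\mathcal{P}$ is genuinely an invariant of the unparametrized hypersurface; once this is granted the conclusion, including the equality case, is immediate from the monotonicity already established.
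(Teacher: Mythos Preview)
Your proposal is correct and follows essentially the same approach as the paper: discard the tangential component of the velocity of (\ref{homothetic flow}) so that it reduces to the support function flow (\ref{sff}), and then invoke Proposition~\ref{prop P is nonincreasing}. The paper's proof is just a one-sentence version of your first paragraph, and your alternative argument is simply a rephrasing of the same idea.
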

\begin{proof}
In order to compute the variation of $\mathcal{P}$ along (\ref{homothetic flow}), we can disregard tangential motions, that is, instead of the flow (\ref{homothetic flow}), we can consider the flow (\ref{nhf}) which, as argued in the proof of Proposition \ref{sff exists for all time}, coincides with the flow (\ref{sff}).
Hence, the proposition follows from Proposition \ref{prop P is nonincreasing}.
\end{proof}

A hypersurface $\Sigma$ in $\mathbb{H}^n$ can also be seen as an Euclidean hypersurface (just endow $\mathbb{B}^n$ with the Euclidean metric $\delta$).

For an Euclidean hypersurface $\Sigma$ we define the quantity $\mathcal{Q}(\Sigma)$ by

$$    \mathcal{Q}(\Sigma) = \left[ \omega_{n-1} \left( \frac{|\Sigma|_\delta}{\omega_{n-1}}  \right)^{\frac{n+1}{n-1}} \right]^{-1} \int_{\Sigma} |x|^2 \, (d\Sigma)_\delta,
$$
where $|\Sigma|_\delta$ and $(d\Sigma)_\delta$ are the area and the area element of $\Sigma$ with respect to the metric induced by the Euclidean metric.

The next proposition relates the quantities $\mathcal{P}$ and $\mathcal{Q}$.

\begin{proposition} \label{lim P to Q}It holds
\begin{equation} \label{P to Q}
 \lim_{t \to \infty} \mathcal{P}(\Sigma_t) = \mathcal{Q}(\Sigma_0).
 \end{equation}
\end{proposition}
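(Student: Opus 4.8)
The plan is to compute both sides explicitly in terms of the Euclidean data of $\Sigma_0$, using the remark that (in the Euclidean picture) $\Sigma_t$ is just the image of $\Sigma_0$ under the homothety $x \mapsto e^{-t}x$. First I would record how the relevant hyperbolic quantities scale. Writing $s = e^{-t}$, the Euclidean area element scales as $(d\Sigma_t)_\delta = s^{n-1}(d\Sigma_0)_\delta$, and the conformal factor of the Poincaré metric is $\big(2/(1-|x|^2)\big)^2$; on $\Sigma_t$ a point has Euclidean norm $s|x|$ with $x$ ranging over $\Sigma_0$. Hence the hyperbolic area element on $\Sigma_t$ is $\big(2/(1-s^2|x|^2)\big)^{n-1} s^{n-1}(d\Sigma_0)_\delta$, and the function $\rho$ evaluated on $\Sigma_t$ equals $(1+s^2|x|^2)/(1-s^2|x|^2)$.

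Next I would expand each of $|\Sigma_t|$ and $\mathcal{I}(\Sigma_t)$ to leading order as $s \to 0$. Since $2^{n-1}s^{n-1}(1-s^2|x|^2)^{-(n-1)} = 2^{n-1}s^{n-1}\big(1 + (n-1)s^2|x|^2 + O(s^4)\big)$, we get
\begin{equation*}
|\Sigma_t| = 2^{n-1}s^{n-1}\left[ |\Sigma_0|_\delta + (n-1)s^2 \int_{\Sigma_0}|x|^2\,(d\Sigma_0)_\delta + O(s^4) \right].
\end{equation*}
Similarly, since $\rho = (1+s^2|x|^2)(1-s^2|x|^2)^{-1} = 1 + 2s^2|x|^2 + O(s^4)$, multiplying by the area element and integrating gives
\begin{equation*}
\mathcal{I}(\Sigma_t) = 2^{n-1}s^{n-1}\left[ |\Sigma_0|_\delta + (n+1)s^2 \int_{\Sigma_0}|x|^2\,(d\Sigma_0)_\delta + O(s^4) \right].
\end{equation*}
Therefore $\mathcal{I}(\Sigma_t) - |\Sigma_t| = 2^{n-1}s^{n-1}\cdot 2s^2 \int_{\Sigma_0}|x|^2\,(d\Sigma_0)_\delta + O(s^{n+3})$ and $\mathcal{I}(\Sigma_t)+|\Sigma_t| = 2^n s^{n-1}|\Sigma_0|_\delta + O(s^{n+1})$, so
\begin{equation*}
\mathcal{I}(\Sigma_t)^2 - |\Sigma_t|^2 = 2^{2n}s^{2n-2}\,s^2 |\Sigma_0|_\delta \int_{\Sigma_0}|x|^2\,(d\Sigma_0)_\delta \,\big(1 + o(1)\big).
\end{equation*}

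Finally I would plug this into the definition \eqref{P} of $\mathcal{P}$. The denominator is $\big[\omega_{n-1}\big]^{-2}\big(|\Sigma_t|/\omega_{n-1}\big)^{-2n/(n-1)} = |\Sigma_t|^{-2n/(n-1)}$, and $|\Sigma_t|^{-2n/(n-1)} = \big(2^{n-1}s^{n-1}|\Sigma_0|_\delta\big)^{-2n/(n-1)}\big(1+o(1)\big) = 2^{-2n}s^{-2n}|\Sigma_0|_\delta^{-2n/(n-1)}\big(1+o(1)\big)$. Multiplying, all powers of $s$ cancel: $\mathcal{P}(\Sigma_t) = |\Sigma_0|_\delta^{-2n/(n-1)}\cdot s^{-2n}\cdot 2^{-2n}\cdot 2^{2n}s^{2n}|\Sigma_0|_\delta \int_{\Sigma_0}|x|^2\,(d\Sigma_0)_\delta\,(1+o(1)) = |\Sigma_0|_\delta^{-(n+1)/(n-1)}\int_{\Sigma_0}|x|^2\,(d\Sigma_0)_\delta\,(1+o(1))$, which converges to exactly $\mathcal{Q}(\Sigma_0) = \big[\omega_{n-1}\big(|\Sigma_0|_\delta/\omega_{n-1}\big)^{(n+1)/(n-1)}\big]^{-1}\int_{\Sigma_0}|x|^2\,(d\Sigma_0)_\delta$ (the factors of $\omega_{n-1}$ visibly cancel there too). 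The main obstacle is purely bookkeeping: one must carry the $O(s^2)$ corrections through the squaring and the fractional power of $|\Sigma_t|$ carefully enough to see that the leading $s$-powers cancel exactly and that the surviving constant is the clean expression $\mathcal{Q}(\Sigma_0)$ — in particular checking that the $O(s^4)$ terms in $\rho$ and in the conformal factor genuinely contribute only to the $o(1)$ remainder after the cancellation, and that no boundary terms or issues of uniform convergence arise because $\Sigma_0$ is compact and lies in the open ball.
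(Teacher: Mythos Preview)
Your argument is correct and takes a genuinely different route from the paper. The paper first invokes the monotonicity of $\mathcal{P}$ to guarantee existence of the limit, then applies l'H\^opital's rule using the evolution equations \eqref{evolution area SFF} and \eqref{evolution total rho SFF} to rewrite $\lim_{t\to\infty}\mathcal{P}(\Sigma_t)$ as $\lim_{t\to\infty}\bigl[\omega_{n-1}(|\Sigma|/\omega_{n-1})^{(n+1)/(n-1)}\bigr]^{-1}\int_\Sigma(\rho^2-1)\,d\Sigma$, and finally compares this to $\mathcal{Q}(\Sigma_t)$ via uniform estimates on the conformal factor together with the scale invariance of $\mathcal{Q}$. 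You instead bypass the flow machinery entirely: you use directly that $\Sigma_t$ is the Euclidean homothety $e^{-t}\Sigma_0$, Taylor-expand the conformal factor and $\rho$ in the small parameter $s=e^{-t}$, and read off the limit from the leading-order terms. Your approach is more elementary and self-contained (it does not need l'H\^opital, the evolution equations, or the prior monotonicity result), while the paper's approach fits more organically with the surrounding flow computations.

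One small bookkeeping slip: the powers of $\omega_{n-1}$ do \emph{not} cancel to $1$ in either $\mathcal{P}$ or $\mathcal{Q}$. In fact $\bigl[\omega_{n-1}(|\Sigma_t|/\omega_{n-1})^{n/(n-1)}\bigr]^{-2}=\omega_{n-1}^{2/(n-1)}|\Sigma_t|^{-2n/(n-1)}$ and likewise $\bigl[\omega_{n-1}(|\Sigma_0|_\delta/\omega_{n-1})^{(n+1)/(n-1)}\bigr]^{-1}=\omega_{n-1}^{2/(n-1)}|\Sigma_0|_\delta^{-(n+1)/(n-1)}$. Both sides carry the same residual factor $\omega_{n-1}^{2/(n-1)}$, so your final identity $\lim_{t\to\infty}\mathcal{P}(\Sigma_t)=\mathcal{Q}(\Sigma_0)$ is unaffected, but the intermediate line asserting $\omega_{n-1}^{-2}(|\Sigma_t|/\omega_{n-1})^{-2n/(n-1)}=|\Sigma_t|^{-2n/(n-1)}$ should be corrected.
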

\begin{proof}
First, note that since $\mathcal{P}(\Sigma_t)$ is decreasing and bounded below (by $0$), the limit on the left hand side of (\ref{P to Q}), in fact, exists.

Also, since the quantities
$$
\mathcal{I}^2 - |\Sigma|^2
$$
and
$$
\omega_{n-1} \left( \frac{|\Sigma|}{\omega_{n-1}} \right)^{\frac{2n}{n-1}}
$$
converge to $0$, l'H\^opital's rule together with (\ref{evolution area SFF}), (\ref{evolution total rho SFF}) and a straightforward computation give 
$$
\lim_{t \to \infty} \mathcal{P}(\Sigma) = \lim_{t \to 0} \left[ \omega_{n-1}\left( \frac{|\Sigma|}{\omega_{n-1}} \right)^{\frac{n+1}{n-1}} \right]^{-1}\int_\Sigma (\rho^2 - 1) \, d\Sigma.
$$

Let $\varepsilon > 0$ be given. Take $\eta > 0$ such that $|x| < \eta$ implies 
$$
\left| \left( \frac{1}{1-|x|^2} \right)^{n+1} - 1 \right| < \varepsilon \ \ \text{and} \ \ \left| \left( \frac{1}{1-|x|^2} \right)^{n-1} - 1 \right| < \varepsilon. $$
Using that 
$$
\rho = \frac{1 + |x|^2}{1 - |x|^2}
$$
and that
$$ 
d\Sigma = \left( \frac{2}{1-|x|^2} \right)^{n-1}\left( d\Sigma \right)_\delta
$$
we have, for each $\Sigma$ contained in $\lbrace x \in \mathbb{B}^n; \ |x| < \eta \rbrace$, that
$$
\left| \frac{\displaystyle\int_\Sigma (\rho^2 - 1) \, d\Sigma}{2^{n+1} \displaystyle\int_{\Sigma} |x|^2 \, (d\Sigma)_\delta} -1 \right|
\leq \frac{\displaystyle\int_\Sigma |x|^2 \left| \left( \frac{1}{1-|x|^2} \right)^{n+1} - 1 \right| \, (d\Sigma)_\delta}{\displaystyle\int_{\Sigma} |x|^2 \, (d\Sigma)_\delta}
< \varepsilon
$$
and
$$
\left| \frac{\left( \frac{|\Sigma|}{\omega_{n-1}} \right)}{2^{n-1}\left( \frac{|\Sigma|_\delta}{\omega_{n-1}} \right)} - 1 \right|
\leq \frac{\displaystyle\int_\Sigma \left| \left( \frac{1}{1-|x|^2} \right)^{n-1} - 1 \right| \, (d\Sigma)_\delta}{|\Sigma|_\delta} < \varepsilon.
$$
Hence,
\begin{equation} \label{limits}
\lim_{t \to \infty} \frac{\int_\Sigma (\rho^2 - 1) \, d\Sigma}{2^{n+1} \int_\Sigma |x|^2 \, (d\Sigma)_\delta} = 1 \ \ \text{and} \ \ 
\lim_{t \to \infty} \frac{\left( \frac{|\Sigma|}{\omega_{n-1}} \right)}{2^{n-1}\left( \frac{|\Sigma|_\delta}{\omega_{n-1}} \right)} = 1.
\end{equation}
Using (\ref{limits}) and the scale invariance of the quantity $\mathcal{Q}$ we have
\begin{align*}
    \lim_{t \to \infty} \frac{\mathcal{P}(\Sigma_t)}{\mathcal{Q}(\Sigma_0)}
    & = \lim_{t \to \infty} \frac{\mathcal{P}(\Sigma_t)}{\mathcal{Q}(\Sigma_t)} \\
    & = \lim_{t \to \infty} \left[ \left( \frac{\int_\Sigma (\rho^2 - 1) \, d\Sigma }{\omega_{n-1} \left( \frac{|\Sigma|}{\omega_{n-1}} \right)^{\frac{n+1}{n-1}}} \right) \left( \frac{\int_\Sigma |x|^2\, (d\Sigma)_\delta}{\omega_{n-1}\left( \frac{|\Sigma|}{\omega_{n-1}} \right)^{\frac{n+1}{n-1}}} \right)^{-1}\right] \\
    & = \left( \lim_{t \to \infty} \frac{\int_\Sigma (\rho^2 - 1) \, d\Sigma}{2^{n+1} \int_{\Sigma} |x|^2 \, (d\Sigma)_\delta} \right)
        \left( \lim_{t \to \infty} \frac{\left( \frac{|\Sigma|}{\omega_{n-1}} \right)}{2^{n-1}\left( \frac{|\Sigma|_\delta}{\omega_{n-1}} \right)} \right)^{-\frac{n+1}{n-1}} \\
    & = 1.
\end{align*}
\end{proof}

The following two propositions relate the geometry of $\Sigma$ as a hypersurface in $\mathbb{H}^n$ with the geometry of $\Sigma$ as an Euclidean hypersurface.

\begin{proposition} \label{mean curvatures}
Let $\psi: \Sigma \to \mathbb{B}^n$ be so that, as a hypersurface in $\mathbb{H}^n$, its mean curvature satisfies $H_1 \geq 1$. Then, as an Euclidean hypersurface, $\Sigma$ is mean-convex.
\end{proposition}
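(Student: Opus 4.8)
The plan is to compare the second fundamental form of $\Sigma$ with respect to the hyperbolic metric $g = e^{2\phi}\delta$ of the ball model, where $\phi = \log\frac{2}{1-|x|^2}$, with its second fundamental form with respect to the Euclidean metric $\delta$, via the standard conformal transformation law. If $\nu$ denotes the Euclidean outward unit normal of $\Sigma$ --- which, as an oriented direction, coincides with the hyperbolic outward normal $\xi$ since $\xi = e^{-\phi}\nu$ --- then the principal curvatures satisfy
\begin{equation*}
\kappa_i^{g} \;=\; e^{-\phi}\bigl(\kappa_i^{\delta} + \partial_\nu\phi\bigr), \qquad i = 1,\dots,n-1,
\end{equation*}
with $\partial_\nu\phi = \langle \nabla^\delta\phi,\nu\rangle_\delta$. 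I would first record this identity; it follows from the conformal change of the Levi-Civita connection (restricted to $\Sigma$ and projected onto $\xi$), and as a quick consistency check one may evaluate it on a geodesic sphere $\{|x| = R\}$, where it returns $\kappa_i^g = \frac{1+R^2}{2R} = \coth r > 1$, matching the horospherical convexity of geodesic balls.

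Next I would substitute the explicit ball-model data. A direct computation gives $e^{-\phi} = \tfrac12(1-|x|^2)$ and $\nabla^\delta\phi = \frac{2x}{1-|x|^2}$, whence $\partial_\nu\phi = \frac{2\langle x,\nu\rangle}{1-|x|^2}$ and therefore
\begin{equation*}
\kappa_i^{g} \;=\; \tfrac12(1-|x|^2)\,\kappa_i^{\delta} + \langle x,\nu\rangle, \qquad i = 1,\dots,n-1.
\end{equation*}
Summing over $i$ and writing $\sigma_1^{\delta} = \sum_i\kappa_i^{\delta}$ for the (unnormalized) Euclidean mean curvature, the hypothesis $H_1 = \frac{1}{n-1}\sum_i\kappa_i^{g}\geq 1$, i.e. $\sum_i\kappa_i^g\geq n-1$, becomes
\begin{equation*}
\tfrac12(1-|x|^2)\,\sigma_1^{\delta} \;\geq\; (n-1)\bigl(1-\langle x,\nu\rangle\bigr).
\end{equation*}

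Finally I would conclude by a positivity observation. Since $\Sigma\subset\mathbb{B}^n$, we have $|x|<1$ on $\Sigma$; as $|\nu|_\delta = 1$, the Cauchy--Schwarz inequality gives $\langle x,\nu\rangle\leq|x|<1$, so the right-hand side of the last displayed inequality is strictly positive, and since $1-|x|^2>0$ as well, we conclude $\sigma_1^{\delta}>0$ on all of $\Sigma$. Hence $\Sigma$ is (strictly) mean-convex as a Euclidean hypersurface, as claimed. I do not expect any genuine obstacle here: the only delicate points are getting the sign and normalization in the conformal transformation law right --- handled by the geodesic-sphere check above --- and keeping the choice of ``outward'' normal consistent between the hyperbolic and Euclidean pictures.
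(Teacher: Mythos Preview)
Your proof is correct and follows essentially the same approach as the paper: both apply the standard conformal transformation law for mean curvature under $g=\phi^2\delta$ (you write $\phi$ additively as $g=e^{2\phi}\delta$ and work with principal curvatures before summing, but this is only a cosmetic difference), and then conclude from $|\langle x,\nu\rangle|\le|x|<1$ via Cauchy--Schwarz. The geodesic-sphere sanity check you include is a nice addition not present in the paper's proof.
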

\begin{proof}
In Poincar\'e's model for $\mathbb{H}^n$, the hyperbolic metric is given by $$\langle \ , \ \rangle = \phi^2 \delta,$$ where
\begin{equation} \label{phi}
\phi = \frac{2}{1 - |x|^2}.
\end{equation}
In particular, since $\langle \xi , \xi \rangle = 1,$ it follows that
$$ \delta(\phi\xi,\phi\xi) = 1,$$
that is,
\begin{equation} \label{norma de xi}
|\phi\xi| = 1.    
\end{equation}
The well known formula for the mean curvature under a conformal change of metric gives
$$
H_1 = \phi^{-1} H_1^\delta + \phi^{-1} \xi (\phi),
$$
where $H_1^\delta$ denotes the mean curvature of $\Sigma$ as an Euclidean hypersurface.
Using that 
\begin{equation} \label{derivative of phi}
\xi (\phi) = \phi^2 \delta (\xi,\psi),
\end{equation}
we find
$$
H_1^\delta =  \phi \left( H_1 - \phi\delta(\xi,\psi) \right) > 0
$$
since $H_1 \geq 1$ and, by Cauchy's inequality together with (\ref{norma de xi}),
$$
\phi\delta (\xi, \psi) \leq \phi |\xi| \cdot |\psi| = |\phi \xi|\cdot |\psi| = |\psi| < 1.
$$ 
\end{proof}

\begin{proposition} \label{c to hc}
Let $\psi_0: \Sigma \to \mathbb{B}^n$ be such that, as an Euclidean hypersurface, $\Sigma$ is strictly convex. Then, there exists $T \in [0,\infty)$ for which
$\varphi_t:\Sigma \to \mathbb{B}^n$ given by $\varphi_t = e^{-t}\psi_0$ is horospherically convex for each $t \geq T$.
\end{proposition}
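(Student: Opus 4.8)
The plan is to control the principal curvatures of $\varphi_t(\Sigma)$ computed with respect to the hyperbolic metric, and to show that each of them is at least $1$ once $t$ is large enough. The starting point is the componentwise version of the identity used in the proof of Proposition \ref{mean curvatures}: denoting by $\kappa_1,\ldots,\kappa_{n-1}$ the principal curvatures of $\Sigma$ in $\mathbb{H}^n$ and by $\kappa_1^\delta,\ldots,\kappa_{n-1}^\delta$ those of $\Sigma$ as a Euclidean hypersurface (taken with respect to matching principal directions and outward unit normals), the conformal relation $\langle\cdot,\cdot\rangle=\phi^2\delta$ with $\phi$ as in (\ref{phi}), together with $\nabla^\delta\phi=\phi^2\psi$ (cf.\ (\ref{derivative of phi})), gives
$$
\kappa_i \;=\; \phi^{-1}\kappa_i^\delta + \delta(N,\psi),
$$
where $N$ is the Euclidean outward unit normal (so that $N=\phi\,\xi$ by (\ref{norma de xi})) and $\psi$ is the position vector. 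In particular, $\Sigma$ is horospherically convex if and only if $\kappa_i^\delta\geq\phi\,(1-\delta(N,\psi))$ for all $i$.

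Next I would substitute the homothety (\ref{varphi_t}) into this relation. Writing $\lambda=e^{-t}$, the map $\varphi_t$ is, from the Euclidean viewpoint, the homothety of ratio $\lambda$ centered at the origin, carrying a point $x\in\Sigma_0$ to $\lambda x\in\Sigma_t$. At $\lambda x$, the Euclidean principal curvatures of $\Sigma_t$ equal $\lambda^{-1}\kappa_i^\delta(x)$, the Euclidean outward unit normal is still $N(x)$, the position vector is $\lambda x$, and $\phi(\lambda x)=2/(1-\lambda^2|x|^2)$. Hence the hyperbolic principal curvatures of $\varphi_t(\Sigma)$ at $\lambda x$ are
$$
\kappa_i(\lambda x) \;=\; \frac{1-\lambda^2|x|^2}{2\lambda}\,\kappa_i^\delta(x) + \lambda\,\delta\bigl(N(x),x\bigr),
$$
where now $\kappa_i^\delta(x)$ and $N(x)$ refer to $\Sigma_0$.

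Finally I would use compactness together with strict convexity. Since $\Sigma_0$ is closed and strictly convex, $c:=\min_{x\in\Sigma_0}\min_i\kappa_i^\delta(x)>0$; since $\Sigma_0$ is a compact subset of the open ball $\{|x|<1\}$, $R:=\max_{x\in\Sigma_0}|x|<1$; and Cauchy--Schwarz gives $|\delta(N(x),x)|\leq|x|\leq R$. Thus, for $0<\lambda\leq 1$,
$$
\kappa_i(\lambda x) \;\geq\; \frac{(1-R^2)\,c}{2\lambda}-\lambda R \;\geq\; \frac{(1-R^2)\,c}{2\lambda}-1,
$$
the last step using $\lambda R\leq 1$; this is $\geq 1$ as soon as $\lambda\leq(1-R^2)c/4$. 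Taking $T=\max\{0,\,-\log((1-R^2)c/4)\}\in[0,\infty)$, we conclude that every principal curvature of $\varphi_t(\Sigma)$ relative to the hyperbolic metric is $\geq 1$ for all $t\geq T$, that is, $\varphi_t$ is horospherically convex for $t\geq T$.

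The point requiring care --- rather than any real difficulty --- is the bookkeeping behind the formula for $\kappa_i(\lambda x)$: one must verify that the inverse scaling of the Euclidean principal curvatures, the invariance of the Euclidean unit normal, and the behavior of $\psi$ and of $\phi$ under the homothety all enter correctly. Once this is in place, the uniform bounds furnished by compactness close the argument at once. It is worth emphasizing that strict convexity (and not merely convexity) is used here: if some $\kappa_i^\delta(x)$ vanished, the first term in the displayed expression would stay bounded and could not attain the value $1$ as $\lambda\to 0$.
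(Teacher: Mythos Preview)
Your proof is correct and follows essentially the same route as the paper's: both use the conformal-change formula relating the hyperbolic and Euclidean second fundamental forms (you phrase it in terms of principal curvatures, the paper in terms of the bilinear form $b$), track how the Euclidean curvatures blow up under the homothety $x\mapsto e^{-t}x$ while $\phi_t\to 2$ uniformly, and then invoke compactness together with strict convexity to force every hyperbolic principal curvature above $1$ for large $t$. Your version is slightly more explicit (you produce a concrete $T$) and bounds the cross term $\lambda\,\delta(N,x)$ by $\lambda R<1$ rather than discarding it, which sidesteps the paper's implicit reliance on star-shapedness in the step ``using the convexity of $\Sigma$, we find $b(v,v)\ge\phi\,b^\delta(v,v)$''.
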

\begin{proof}
Let $b$ and $b^\delta$ be the second fundamental forms of $\Sigma$ and $\Sigma^\delta$, respectively. A well known formula in conformal geometry gives
$$
b = \phi b^\delta + \phi\xi(\phi) \delta,
$$
where $\phi$ is defined by (\ref{phi}). Together with (\ref{derivative of phi}), this gives
$$
b = \phi b^\delta + \phi^3 \delta (\xi, \psi) \delta.
$$
Thus, for any tangent vector $v$ we have
$$
b(v,v) = \phi b^\delta (v,v) + \phi^3 \delta (\xi, \psi) \delta (v,v).
$$
Hence, using the convexity of $\Sigma$, we find
$$
b(v,v) \geq \phi b^\delta (v,v).
$$
Now, let $b_t$ and $b_t^\delta$ be the second fundamental forms of $\Sigma_t$ and $\Sigma_t^\delta$, respectively. The previous inequality gives
\begin{equation} \label{inequality sff 1}
b_t(v,v) \geq \phi_t^{-1}b_t^\delta (\phi_t v,\phi_t v).
\end{equation}
Also, since $b^\delta_t = e^{-t} b^\delta$, we have
\begin{equation} \label{inequality sff 2}
b^\delta_t (\phi_t v, \phi_t v) = e^t b^\delta (e^{-t} \phi_t v, e^{-t} \phi_t v).
\end{equation}
Combining (\ref{inequality sff 1}) and (\ref{inequality sff 2}) we get
\begin{equation} \label{inequality sff 3}
b_t(v,v) \geq   e^t b^\delta (e^{-t} \phi_t v, e^{-t} \phi_t v).
\end{equation}
If $g_t$ and $g^\delta$ denote the metrics of $\Sigma_t$ and $\Sigma^\delta$, respectively, one easily checks that
\begin{equation} \label{relation between metrics}
    g_t(v,v) = g^\delta(e^{-t}\phi_t v,e^{-t}\phi_t v).
\end{equation}
Thus, (\ref{inequality sff 3}) and (\ref{relation between metrics}) yields
$$
\frac{b_t(v,v)}{g_t(v,v)} \geq \phi_t^{-1} e^t\frac{b^\delta (e^{-t}\phi_t v,e^{-t}\phi_t v)}{g^\delta(e^{-t}\phi_t v,e^{-t}\phi_t v)},
$$
for any tangent vector $v$.
Therefore, since $\phi_t$ converges uniformly to $2$ as $t$ goes to infinity and
$\Sigma^\delta$ is strictly convex,
we can choose $T \in [0, \infty)$ so that all of the principal curvatures of $\Sigma_t$ are no less than $1$, for each $t \geq T$.

\end{proof}
\section{Proofs of the theorems}

We begin with the proof of Theorem \ref{second theorem}. Let $\Sigma$ be a star-shaped hypersurface in $\mathbb{H}^n$ whose mean curvature satisfies $H_1 \geq 1$. Then $\Sigma^\delta$ is a star-shaped hypersurface in $\mathbb{R}^n$. Moreover, by Proposition \ref{mean curvatures}, $\Sigma^\delta$ is strictly mean-convex. 
By a result proved in \cite{GR} it follows that
\begin{equation} \label{desigualdade GR}
\mathcal{Q}(\Sigma^\delta) > \left( \frac{n-1}{n} \right)^2.
\end{equation}
Let $\Sigma_t$, with $\Sigma_0 = \Sigma$, be the one-parameter family of hypersurfaces defined by (\ref{varphi_t}).
By Proposition \ref{lim P to Q}  and (\ref{desigualdade GR}) we have
$$
\lim_{t \to \infty} \mathcal{P}(\Sigma_t) > \left( \frac{n-1}{n} \right)^2.
$$
Since, by Proposition \ref{prop P noinicreasing 2}, $\mathcal{P}(\Sigma_t)$ is nonincreasing, we conclude that
$$
\mathcal{P}(\Sigma_0) > \left( \frac{n-1}{n} \right)^2,
$$
which is just a rewriting of (\ref{weak inequality}).

\begin{remark} {\rm
The quantities $\mathcal{P}(\Sigma)$ and $\mathcal{Q}(\Sigma)$ also make sense when $n=2$. Moreover, it is known that if $\Sigma \subset \mathbb{R}^2$ is convex, then
$$
\mathcal{Q}(\Sigma) > \frac{(2\pi)^2}{54}
$$
(see \cite{Sachs1, Sachs2, Hall}). 
Thus, by proceeding as above, one can show that if $\Sigma$ is a hypersurface in $\mathbb{H}^2$ satisfying
\begin{equation} \label{kappa geq 1}
\kappa \geq 1,
\end{equation}
where $\kappa$ denotes the geodesic curvature of $\Sigma$, then it holds that
$$
\int_\Sigma |x|^2 \, d\Sigma > 2\pi \left[ \frac{(2\pi)^2}{54} \left( \frac{|\Sigma|}{2\pi} \right)^4 + \left( \frac{|\Sigma|}{2\pi} \right)^2 \right]^{\frac{1}{2}},
$$
that is, 
$$
\int_\Sigma |x|^2 \, d\Sigma > |\Sigma|\left( \frac{1}{54}|\Sigma|^2 + 1  \right)^{\frac{1}{2}}.
$$
Also, by considering a sequence $\lbrace \Lambda_n \rbrace$ of convex curves in $\mathbb{R}^2$ that converges, in the $C^0$ topology, to an equilateral triangle centered at the origin, one can show, by suitably rescaling the terms of $\lbrace \Lambda_n \rbrace$, that $1/54$ is the largest constant $\Theta$ for which the inequality
$$
\int_\Sigma |x|^2 \, d\Sigma > |\Sigma|\left( \Theta|\Sigma|^2 + 1  \right)^{\frac{1}{2}}
$$
holds for every hypersurface in $\mathbb{H}^2$ satisfying (\ref{kappa geq 1}). We leave the details to the interested reader.}
\end{remark}

Now let us prove Theorem \ref{first theorem}. It is proved in \cite{GR} that there exists a strictly convex surface $\Gamma^\delta$ in $\mathbb{R}^3$ such that
\begin{equation} \label{Q < 1}
\mathcal{Q}(\Gamma^\delta) < 1.
\end{equation}
By the scale invariance of $\mathcal{Q}$, we can assume that $\Gamma^\delta \subset \mathbb{B}^3$. Denote by $\Gamma$ the surface $\Gamma^\delta$ when seen as a hypersurface in $\mathbb{H}^3$. Let $\Gamma_t$, with $\Gamma_0 = \Gamma$, be defined as in (\ref{varphi_t}). Inequality (\ref{Q < 1}) together with Proposition \ref{lim P to Q} give
$$
\lim_{t \to \infty} \mathcal{P}(\Gamma_t) < 1.
$$
Thus, there exists $t_0 \in [0,\infty)$ for which $\mathcal{P}(\Gamma_{t}) < 1$, for all $t \geq t_0$. To finish the proof, notice that Proposition \ref{c to hc} guarantees that $t_0$ can be chosen so that $\Gamma_t$ is horospherically convex, for each $t \geq t_0$.

Next, let us prove Theorem \ref{third theorem}. The proof consists of an induction argument very similar to the one given in \cite{GWW2}, but with (\ref{weak inequality}) as the base case.

The $k=0$ case follows from Theorem \ref{second theorem}.

Let $j$ be an integer such that $2j \in \lbrace 0, 1, \ldots, n-3 \rbrace$ and suppose that the inequality holds for $k=2j$, that is, suppose 

$$
\int_\Sigma \rho H_{2j} \, d\Sigma >
\omega_{n-1} \left( \frac{|\Sigma|}{\omega_{n-1}}\right)^{\frac{n}{n-1}}\left[\left( \frac{n-1}{n}\right)^2 + \left( \frac{|\Sigma|}{\omega_{n-1}}\right)^{-\frac{2}{n-1}}\right]^{\frac{2j+1}{2}}.
$$
It was proved in \cite{GWW3} (see also \cite{GWW4} and \cite{LWX2}) that
\begin{equation} \label{Wang and Xia}
\int_{\Sigma} H_{2j+2} \, d\Sigma \geq |\Sigma| \left[ 1 + \left( \frac{|\Sigma|}{\omega_{n-1}}\right)^{-\frac{2}{n-1}} \right]^{j+1}.
\end{equation}
H\"older's inequality and (\ref{Wang and Xia}) give
\begin{align*}
\left( \int_\Sigma \rho H_{2j+2} \, d\Sigma \right) \left( \frac{H_{2j+2}}{\rho} \, d\Sigma \right)    & \geq  \left( \int_\Sigma H_{2k+2} \, d\Sigma \right)^2 \\
& \geq  |\Sigma|^2 \left[ 1 + \left( \frac{|\Sigma|}{\omega_{n-1}}\right)^{-\frac{2}{n-1}} \right]^{2(j+1)} \\
& > |\Sigma|^2 \left[ \left( \frac{n-1}{n} \right)^2 + \left( \frac{|\Sigma|}{\omega_{n-1}}\right)^{-\frac{2}{n-1}} \right]^{2(j+1)}.
\end{align*}
Thus, if we set
$$
\alpha = |\Sigma|^2 \left[ \left( \frac{n-1}{n} \right)^2 + \left( \frac{|\Sigma|}{\omega_{n-1}}\right)^{-\frac{2}{n-1}} \right]^{2(j+1)},
$$
we find that
\begin{equation} \label{9.2}
\int_\Sigma \rho H_{2j+2} \, d\Sigma - \int_\Sigma \frac{H_{2j+2}}{\rho} \, d\Sigma < \int_\Sigma \rho H_{2j+2} \, d\Sigma -\frac{\alpha}{\int_\Sigma \rho H_{2j+2} \, d\Sigma}.
\end{equation}

It is also known (see \cite{GWW2}, Theorem 8.1) that
\begin{equation} \label{crucial}
\int_\Sigma \rho H_{2j+2} \, d\Sigma - \int_\Sigma \frac{H_{2j+2}}{\rho} \, d\Sigma \geq \int_\Sigma \rho H_{2j} \, d\Sigma.
\end{equation}
Hence, from (\ref{crucial}) and the induction hypothesis, we find
\begin{align}
\label{crucial + hypothesis}
& \int_\Sigma \rho H_{2j+2} \, d\Sigma - \int_\Sigma \frac{H_{2j+2}}{\rho} \, d\Sigma \nonumber \\
& \quad > \omega_{n-1} \left( \frac{|\Sigma|}{\omega_{n-1}}\right)^{\frac{n}{n-1}}\left[\left( \frac{n-1}{n}\right)^2 + \left( \frac{|\Sigma|}{\omega_{n-1}}\right)^{-\frac{2}{n-1}}\right]^{\frac{2j+1}{2}}.
\end{align}

Consider the function
$f(t) = t - \alpha t^{-1}$. From (\ref{9.2}) and (\ref{crucial + hypothesis}) we have
\begin{align}
& f\left(\int_\Sigma \rho H_{2j+2} \, d\Sigma \right) \nonumber \\
& \quad > \omega_{n-1} \left( \frac{|\Sigma|}{\omega_{n-1}}\right)^{\frac{n}{n-1}}\left[\left( \frac{n-1}{n}\right)^2 + \left( \frac{|\Sigma|}{\omega_{n-1}}\right)^{-\frac{2}{n-1}}\right]^{\frac{2j+1}{2}}. \label{9.4}
\end{align}
We also have
\begin{align*}
 f&\left( \omega_{n-1} \left( \frac{|\Sigma|}{\omega_{n-1}}\right)^{\frac{n}{n-1}}\left[\left( \frac{n-1}{n}\right)^2 + \left( \frac{|\Sigma|}{\omega_{n-1}}\right)^{-\frac{2}{n-1}}\right]^{\frac{2j+3}{2}} \right) \\
& = \left( \frac{n-1}{n} \right)^2 \omega_{n-1} \left( \frac{|\Sigma|}{\omega_{n-1}}\right)^{\frac{n}{n-1}}\left[\left( \frac{n-1}{n}\right)^2 + \left( \frac{|\Sigma|}{\omega_{n-1}}\right)^{-\frac{2}{n-1}}\right]^{\frac{2j+1}{2}} \\
& < f\left(\int_\Sigma \rho H_{2j+2} \, d\Sigma \right),
\end{align*}
where the last inequality follows from (\ref{9.4}). Since $f$ is increasing on $[0,\infty)$, we find that
$$
\int_\Sigma \rho H_{2j+2} \, d\Sigma > 
\omega_{n-1} \left( \frac{|\Sigma|}{\omega_{n-1}}\right)^{\frac{n}{n-1}}\left[\left( \frac{n-1}{n}\right)^2 + \left( \frac{|\Sigma|}{\omega_{n-1}}\right)^{-\frac{2}{n-1}}\right]^{\frac{2j+3}{2}},
$$
which completes the induction.



\begin{thebibliography}{10}

\bibitem{Ale37}
A.~{Alexandrov}.
\newblock {Zur Theorie der gemischten Volumina von konvexen K\"orpern. II. Neue
  Ungleichungen zwischen den gemischten Volumina und ihre Anwendungen.}
\newblock {\em {Rec. Math. Moscou, n. Ser.}}, 2:1205--1238, 1937.

\bibitem{Ale38}
A.~{Alexandrov}.
\newblock {Zur Theorie der gemischten Volumina von konvexen K\"orpern III. Die
  Erweiterung zweier Lehrs\"atze Minkowskis \"uber die konvexen Polyeder auf
  die beliebigen konvexen K\"orper.}
\newblock {\em {Rec. Math. Moscou, n. Ser.}}, 3:27--46, 1938.

\bibitem{BHW}
S.~Brendle, P.-K. Hung, and M.-T. Wang.
\newblock A {M}inkowski inequality for hypersurfaces in the anti--de
  {S}itter--{S}chwarzschild manifold.
\newblock {\em Comm. Pure Appl. Math.}, 69(1):124--144, 2016.

\bibitem{DGS}
M.~Dahl, R.~Gicquaud, and A.~Sakovich.
\newblock Penrose type inequalities for asymptotically hyperbolic graphs.
\newblock {\em Ann. Henri Poincar\'e}, 14(5):1135--1168, 2013.

\bibitem{dLG1}
L.~L. de~Lima and F.~Gir\~ao.
\newblock The {ADM} mass of asymptotically flat hypersurfaces.
\newblock {\em Trans. Amer. Math. Soc.}, 367(9):6247--6266, 2015.

\bibitem{dLG2}
L.~L. de~Lima and F.~Gir\~ao.
\newblock An {A}lexandrov-{F}enchel-type inequality in hyperbolic space with an
  application to a {P}enrose inequality.
\newblock {\em Ann. Henri Poincar\'e}, 17(4):979--1002, 2016.

\bibitem{dSG}
A.~de~Sousa and F.~Gir\~{a}o.
\newblock The {G}auss--{B}onnet--{C}hern mass of higher-codimension graphs.
\newblock {\em Pacific J. Math.}, 298(1):201--216, 2019.

\bibitem{GWW4}
Y.~{Ge}, G.~{Wang}, and J.~{Wu}.
\newblock {Hyperbolic Alexandrov-Fenchel quermassintegral inequalities I}.
\newblock {\em ArXiv e-prints}, Mar. 2013.

\bibitem{GWW3}
Y.~Ge, G.~Wang, and J.~Wu.
\newblock Hyperbolic {A}lexandrov-{F}enchel quermassintegral inequalities {II}.
\newblock {\em J. Differential Geom.}, 98(2):237--260, 2014.

\bibitem{GWW1}
Y.~Ge, G.~Wang, and J.~Wu.
\newblock A new mass for asymptotically flat manifolds.
\newblock {\em Adv. Math.}, 266:84--119, 2014.

\bibitem{GWW2}
Y.~Ge, G.~Wang, and J.~Wu.
\newblock The {GBC} mass for asymptotically hyperbolic manifolds.
\newblock {\em Math. Z.}, 281(1-2):257--297, 2015.

\bibitem{GR}
F.~Gir\~ao and D.~Rodrigues.
\newblock Weighted geometric inequalities for hypersufaces in sub-static
  manifolds.
\newblock {\em Preprint}, 2018.

\bibitem{Guan-Li}
P.~Guan and J.~Li.
\newblock The quermassintegral inequalities for {$k$}-convex starshaped
  domains.
\newblock {\em Adv. Math.}, 221(5):1725--1732, 2009.

\bibitem{Hall}
R.~R. Hall.
\newblock A class of isoperimetric inequalities.
\newblock {\em J. Analyse Math.}, 45:169--180, 1985.

\bibitem{huisken}
G.~Huisken.
\newblock Evolution of hypersurfaces by their curvature in {R}iemannian
  manifolds.
\newblock In {\em Proceedings of the {I}nternational {C}ongress of
  {M}athematicians, {V}ol. {II} ({B}erlin, 1998)}, number Extra Vol. II, pages
  349--360, 1998.

\bibitem{Lam}
M.-K.~G. Lam.
\newblock {\em The {G}raph {C}ases of the {R}iemannian {P}ositive {M}ass and
  {P}enrose {I}nequalities in {A}ll {D}imensions}.
\newblock ProQuest LLC, Ann Arbor, MI, 2011.
\newblock Thesis (Ph.D.)--Duke University.

\bibitem{LWX}
H.~Li, Y.~Wei, and C.~Xiong.
\newblock The {G}auss-{B}onnet-{C}hern mass for graphic manifolds.
\newblock {\em Ann. Global Anal. Geom.}, 45(4):251--266, 2014.

\bibitem{LWX2}
H.~Li, Y.~Wei, and C.~Xiong.
\newblock A geometric inequality on hypersurface in hyperbolic space.
\newblock {\em Adv. Math.}, 253:152--162, 2014.

\bibitem{mantegazza}
C.~Mantegazza.
\newblock {\em Lecture notes on mean curvature flow}, volume 290 of {\em
  Progress in Mathematics}.
\newblock Birkh\"{a}user/Springer Basel AG, Basel, 2011.

\bibitem{MV}
H.~Mirandola and F.~Vit\'orio.
\newblock The positive mass theorem and {P}enrose inequality for graphical
  manifolds.
\newblock {\em Comm. Anal. Geom.}, 23(2):273--292, 2015.

\bibitem{Sachs1}
H.~Sachs.
\newblock \"uber eine {K}lasse isoperimetrischer {P}robleme. {I}, {II}.
\newblock {\em Wiss. Z. Martin-Luther-Univ. Halle-Wittenberg. Math.-Nat.
  Reihe}, 8:121--126, 127--134, 1958/1959.

\bibitem{Sachs2}
H.~Sachs.
\newblock Ungleichungen f\"ur {U}mfang, {F}l\"acheninhalt und
  {T}r\"agheitsmoment konvexer {K}urven.
\newblock {\em Acta Math. Acad. Sci. Hungar.}, 11:103--115, 1960.

\end{thebibliography}

\end{document}